\theoremstyle{definition}
\newtheorem{theorem}{Theorem}
\newtheorem{lemma}[theorem]{Lemma}
\newtheorem{example}[theorem]{Example}
\newtheorem{proposition}[theorem]{Proposition}
\newtheorem{corollary}[theorem]{Corollary}
\newtheorem{definition}{Definition}
\def\N{\mathbb{N}}
\begin{document}

\title{Biologically unavoidable sequences}
\author{Samuel A.~Alexander\thanks{Email:
alexander@math.ohio-state.edu}\hphantom{*}\footnote{2010 Mathematics 
Subject Classification: 05C63
(Primary), 92B10 (Secondary)}\\
\emph{Department of Mathematics, the Ohio State University}}

\maketitle

\vspace{-2.525in}
\vspace{-10pt}
\begin{center}
\resizebox*{15cm}{1.731cm}{\input{unavheadtest.supplement}}
\end{center}
\vspace{-10pt}
\vspace{-.9in}
\vspace{2.5 in}

\begin{abstract}
A biologically unavoidable sequence
is an infinite gender sequence which occurs in every gendered, infinite 
genealogical network satisfying certain tame conditions.
We show that every eventually periodic sequence is biologically 
unavoidable (this generalizes K\"{o}nig's Lemma), and we exhibit some 
biologically avoidable sequences.
Finally we give an application of unavoidable sequences
to cellular automata.
\end{abstract}

\section{Introduction}
\label{introsection}

The following definition is motivated by biological considerations.
The idea of modelling the biosphere with a directed graph goes at least 
back to Hennig \cite{hennig}, more recently (and more formally)
to Dress, Moulton, Steel and 
Wu \cite{dress}.  The idea of including vertices for future organisms was 
made explicit in Kornet, Metz, and Schellinx 
\cite{kornet1995}.  The (simplifying) assumption of infinitely many vertices was 
made explicit in Alexander \cite{alexander}.
We hope to submit the results of this paper as an answer to Sturmfels \cite{sturmfels}.

\begin{definition}
\label{populations}
Let $n>0$.
An \emph{infinite $n$-gendered population} is a directed graph $G=(V,E)$, 
together with a map $v\mapsto t(v)\in\mathbb{R}$ assigning 
\emph{birthdates} to vertices and a map $e\mapsto g(e)\in\{1,\ldots,n\}$ 
assigning \emph{genders} to edges, satisfying the following conditions (we 
call $v$ a \emph{parent} of $u$ if $(v,u)\in E$, we define \emph{child} 
similarly, and we define \emph{ancestor} and \emph{descendant} in the 
obvious (strict) way):
\begin{itemize}
\item (A1) There are only finitely many parentless vertices (call them 
\emph{roots}).
\item (A2) Each vertex has only finitely many children.
\item (A3) For every $r\in\mathbb{R}$, $\{v\in V\,:\,t(v)\leq r\}$ is 
finite, and for each $(u,v)\in E$, $t(u)<t(v)$.
\item (A4) $|V|=\infty$.
\item ($n$-Gendered) For every non-root $u$ and
every $1\leq i\leq n$, $u$ has a parent $v$ with $g(v,u)=i$.
\end{itemize}
\end{definition}

Note that A3 implies $G$ is a DAG and is \emph{reverse-well-founded}, i.e.,
has no infinite reverse-directed path.
Thus every non-root is a descendant of some root.
Although Definition~\ref{populations} stipulates that edges be gendered, 
we will often consider special cases where vertices are gendered, 
implicitly giving each edge the gender of its initial vertex.

\begin{definition}
Let $n>0$.
An infinite sequence $s=(s_1,s_2,\ldots)\in\{1,\ldots,n\}^{\mathbb{N}}$
is \emph{biologically unavoidable} if every infinite $n$-gendered 
population \emph{realizes} $s$-- by which we mean there is a
vertex sequence $v_1,v_2,\ldots$ with each $v_i$ a 
parent of $v_{i+1}$ and each $g(v_i,v_{i+1})=s_i$.  If $s$ is not 
biologically unavoidable, it is \emph{biologically avoidable}.
\end{definition}

A priori, biological unavoidability appears ill-defined.  The following lemma shows that it is well-defined.

\begin{lemma}
Suppose $s\in\{1,\ldots,n\}^{\mathbb{N}}$ and at the same
time $s\in \{1,\ldots,m\}^{\mathbb{N}}$.
Then $s$ is realized in every infinite $n$-gendered population if and only if
$s$ is realized in every infinite $m$-gendered population.
\end{lemma}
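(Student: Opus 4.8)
The plan is to reduce to the case where one of the two alphabet sizes equals $k=\min(n,m)$. Since $s\in\{1,\ldots,n\}^{\mathbb{N}}\cap\{1,\ldots,m\}^{\mathbb{N}}$, every entry of $s$ is at most $k$. It therefore suffices to prove, for an arbitrary fixed $N\ge k$, that $s$ is realized in every $k$-gendered population if and only if $s$ is realized in every $N$-gendered population; applying this with $N=n$ and with $N=m$ and chaining the two equivalences yields the lemma. I will prove the two implications separately, and the asymmetry between them is the whole point: passing from $N$ genders down to $k$ is easy, whereas manufacturing extra genders is delicate.

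For the easy implication, suppose every $k$-gendered population realizes $s$, and let $H$ be any $N$-gendered population. I would delete from $H$ every edge whose gender exceeds $k$, obtaining a graph $G'$ on the same vertices with the same birthdates. One checks $G'$ is $k$-gendered: its edge genders lie in $\{1,\ldots,k\}$, and every non-root of $H$ retains, for each $i\le k$, its gender-$i$ parent (those edges survive), so the $k$-Gendered condition still holds; since each such vertex keeps its gender-$1$ parent, the root set is unchanged and A1--A4 persist. By hypothesis $G'$ realizes $s$ along some vertex sequence, and because every $s_i\le k$, each edge it uses has gender $\le k$ and was never deleted; hence the same sequence realizes $s$ in $H$.

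For the hard implication I argue contrapositively: given a $k$-gendered population $G$ that avoids $s$, I will build an $N$-gendered population $H$ that still avoids $s$, contradicting the assumption that every $N$-gendered population realizes $s$. The guiding observation is that since every symbol of $s$ is $\le k$, an edge of gender $>k$ can never appear in a realization of $s$; thus I may add edges of genders $k+1,\ldots,N$ freely, and the resulting $H$ realizes $s$ only if its gender-$\le k$ substructure---which is exactly $G$---already does. Concretely, for each non-root $u$ of $G$ I would attach $N-k$ new incoming edges carrying the distinct genders $k+1,\ldots,N$, each from a vertex born strictly before $u$, so that the birthdate condition in A3 is preserved (after which reverse-well-foundedness is automatic by the remark following Definition~\ref{populations}); I add nothing into the roots of $G$, so no root is silently turned into an under-gendered non-root, and A1 is untouched.

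The main obstacle is A2: I must supply these extra parents without letting any single vertex acquire infinitely many children, and in a simple digraph I cannot reuse a pair $(v,u)$ that is already an edge. A fixed finite pool of new ``helper'' vertices cannot serve the infinitely many non-roots, so I would instead draw each $u$'s new parents greedily from its earlier non-neighbors under a fixed finite reuse budget per vertex. The key leverage is A2 itself: because no vertex has infinitely many children, no finite set of early vertices can be a parent of infinitely many later vertices, so a pigeonhole argument shows that all but finitely many non-roots have at least $N-k$ non-adjacent predecessors to choose from. The finitely many remaining ``young'' non-roots are handled by a finite set of extra early roots (permissible under A1, and used only finitely often so as to respect A2). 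With the assignment carried out, $H$ is an infinite $N$-gendered population whose only genders $\le k$ are those of $G$; hence $H$ avoids $s$, giving the desired contradiction.
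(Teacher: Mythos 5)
Your easy direction (deleting every edge of gender $>k$) is exactly the paper's. Your hard direction takes a genuinely different route, and that is where the trouble lies. The paper never tries to manufacture new parents \emph{inside} the given population: it takes a disjoint copy $P'$ of $P$ and, for each $n$-gendered edge $(v,u)$ of $P$, adds the cross edges $(v',u)$ and $(v,u')$ carrying the new gender, repeating once per extra gender by induction. Because every non-root's new parent lives in the other copy, there are no adjacency clashes to dodge, A2 is inherited ($v'$ gains at most as many new children as $v$ had old ones), and no counting argument is needed. What the disjoint-copy trick buys is precisely the combinatorial feasibility that your in-place construction has to fight for.

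Your construction must solve an assignment problem inside $G$, and the mechanism you propose --- choosing greedily from earlier non-neighbors ``under a fixed finite reuse budget per vertex'' --- fails on some populations. Your pigeonhole correctly shows that all but finitely many non-roots have at least $N-k$ non-adjacent predecessors, but that is a statement about the existence of \emph{candidates}; it does not show that the budgeted assignment can be completed. Concretely, take vertices $w_1,w_2,\ldots$ with $t(w_i)=i$ and an edge $w_i\to w_j$ whenever $i<j\le 2^i$ (a legitimate population: $w_i$ has $2^i-i$ children). The non-adjacent predecessors of $w_j$ are exactly the $w_i$ with $i<\log_2 j$, so every $w_j$ with $j\le 2^m$ must draw its new parents from $\{w_1,\ldots,w_m\}$; the demand $(N-k)\cdot 2^m$ swamps the supply $B\cdot m$ for any fixed budget $B$, and a finite pool of auxiliary early roots does not change this. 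The repair is to notice that A2 requires each vertex to have only \emph{finitely} many children, not boundedly many: drop the fixed budget and instead give each $u$ the $N-k$ most recently born available non-neighbors. Then your own pigeonhole, applied to the first $N-k$ vertices born after a fixed $v$, shows that $v$ is selected by only finitely many $u$ (in the example above, $w_m$ picks up roughly $2^{m-1}$ new children --- finite, which is all A2 asks). With that change your argument goes through, but as written the feasibility of the greedy step is asserted rather than proved, and in the form stated it is false.
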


\begin{proof}
Without loss of generality, $n<m$.
If there is an infinite $m$-gendered population which fails to realize $s$,
delete all edges with genders $>n$ to obtain an infinite $n$-gendered population
which fails to realize $s$.
Conversely, suppose there is an $n$-gendered population $P$ failing to realize $s$.
Inductively it suffices to show there's an $(n+1)$-gendered population failing to realize $s$.
Let $P'$ be a disjoint copy of $P$.  To the graph $P'\sqcup P$, add two $(n+1)$-gendered edges,
$(v',u)$ and $(v,u'$), for every $n$-gendered edge $(v,u)$ in $P$.
It's easy to see this makes $P\sqcup P'$ an $(n+1)$-gendered population
which fails to realize $s$.
\end{proof}

In Section \ref{eventuallyperiodicsection}, we will show that every eventually periodic sequence is 
biologically unavoidable.

In Section \ref{existencesection}, we will establish the existence of biologically avoidable 
sequences.  This nontrivial fact involves a couple of unexpected enumerative
combinatorial arguments.

In Section \ref{juliansection}, we will show that there are biologically avoidable 
sequences from $\{1,2\}^{\mathbb{N}}$ which contain no gender repeated 
thrice in a row.

In Section \ref{johnstonsection}, we will give an unexpected application to cellular 
automata: an alternate proof of a result about spaceship speed limits in 
Conway's Life-like Games, first proved by Nathaniel Johnston 
\cite{johnston}.

\section{Eventually periodic implies biologically unavoidable}
\label{eventuallyperiodicsection}

Before proving the unavoidability of eventually periodic sequences, a 
small amount of machinery must be developed.  For the remainder of this 
section, let $P=(V,E)$ be an infinite $n$-gendered population, $n>0$.

\begin{definition}
For $i\in\mathbb{N}$,
we define the set $V_i\subseteq V$ as follows:
a vertex $u\in V$ lies in $V_i$ if and only if there is
some root $r$ and some directed path from $r$ to $u$ of length $\leq i$.
\end{definition}

Thus $V_0$ is the set of roots, $V_1$ contains the roots and their children, 
and so on.

\begin{lemma}
\label{Vnfinite}
For every 
$i\in\mathbb{N}$, $V_i$ is finite.
\end{lemma}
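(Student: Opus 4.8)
The plan is to prove the claim by induction on $i$, using only axioms (A1) and (A2). The base case $i=0$ is immediate: $V_0$ is by definition exactly the set of roots (vertices reachable from a root by a path of length $\leq 0$, i.e.\ the roots themselves), which is finite by (A1).

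For the inductive step, assuming $V_i$ is finite, I would first establish the key containment
$$V_{i+1} \subseteq V_i \cup \{u \in V : u \text{ is a child of some vertex of } V_i\}.$$
To see this, take $u \in V_{i+1}$ and fix a root $r$ together with a directed path $r = x_0, x_1, \ldots, x_k = u$ of length $k \leq i+1$. If $k \leq i$, then $u \in V_i$ already. Otherwise $k = i+1$, and the truncated path $x_0, \ldots, x_i$ witnesses $x_i \in V_i$, since it is a directed path from the same root $r$ of length $i$; as $(x_i,u)\in E$, the vertex $u$ is a child of a vertex of $V_i$. This truncation step—the observation that an initial segment of a rooted path is again a rooted path—is the one place that requires a little care, and it is essentially the only content of the argument.

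With the containment in hand, finiteness follows immediately: by the inductive hypothesis $V_i$ is finite, and by (A2) each of its finitely many vertices has only finitely many children, so the set of children of vertices of $V_i$ is a finite union of finite sets, hence finite. Thus $V_{i+1}$ is contained in the union of two finite sets and is therefore finite, completing the induction.

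I do not expect a genuine obstacle here; the statement is a structural bookkeeping fact rather than a deep one, and the only subtlety is phrasing the recursion on $V_i$ cleanly. It is worth noting that neither (A3) nor (A4) is used: the argument rests entirely on having finitely many roots (A1) as the base and finite branching (A2) as the inductive engine, so the same proof applies verbatim to any finitely branching directed graph with finitely many roots.
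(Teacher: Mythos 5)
Your proof is correct and uses exactly the ingredients the paper invokes: the paper simply states that the lemma "follows trivially from A1 and A2," and your induction (roots finite by A1, finite branching by A2) is the standard fleshing-out of that one-line argument.
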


\begin{proof}
Follows trivially from A1 and A2.
\end{proof}



\begin{multicols}{2}

\begin{lemma}
For every $1\leq i\leq n$, there is a map
$i^*:V\backslash V_0\to V$
such that for every
$u\in V\backslash V_0$ we have $(i^*u,u)\in E$ and $g(i^*u,u)=i$.
\end{lemma}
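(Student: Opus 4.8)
The plan is to recognize that this lemma is precisely a Skolemization of the ($n$-Gendered) axiom: that axiom asserts that for each non-root $u$ and each gender $i$ there \emph{exists} a parent $v$ with $g(v,u)=i$, and the lemma merely asks us to bundle these existential witnesses into a single function $i^*$ of $u$. So I would fix $i$ with $1 \le i \le n$ and, for each $u \in V \setminus V_0$, consider the set
\[
S_i(u) = \{\, v \in V : (v,u) \in E \text{ and } g(v,u) = i \,\}.
\]
The first step is to observe that each $S_i(u)$ is nonempty, which is immediate from the ($n$-Gendered) condition applied to the non-root $u$ and the gender $i$. The map is then obtained by choosing one element $i^*u \in S_i(u)$ for each $u$, so that by construction $(i^*u,u)\in E$ and $g(i^*u,u)=i$, exactly as required.

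Before carrying out the selection I would record a finiteness observation, which is where the ambient hypotheses enter and which ties this lemma to the local-finiteness theme underlying the whole section. If $v$ is any parent of $u$, then $(v,u)\in E$, so A3 gives $t(v) < t(u)$; hence every parent of $u$ lies in $\{\, w \in V : t(w) \le t(u)\,\}$, which A3 declares finite. Thus each $S_i(u)$ is in fact a nonempty \emph{finite} set of vertices, mirroring Lemma~\ref{Vnfinite}. This observation is reassuring, but it is not strictly needed for the construction.

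The remaining step is the simultaneous selection of one vertex from each $S_i(u)$. Since there are infinitely many non-roots $u$, defining $i^*$ all at once is an appeal to the axiom of choice over the family $\{S_i(u)\}_{u \in V\setminus V_0}$ of nonempty (finite) sets, and with that axiom in hand the proof is finished in a line.

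The step I expect to be the only genuinely delicate point — the real obstacle if one wished to produce a choice-free canonical witness — is that no canonical selection is available from the data at hand. Trying to pick, say, the parent of least birthdate fails because A3 does not force $t$ to be injective, and trying to order vertices recursively from the roots fails because two distinct non-roots can share exactly the same set of parents and the same birthdate (for instance two children joined to a single root by gender-$i$ edges). Such vertices are indistinguishable by $(E,t,g)$, so the finiteness of the $S_i(u)$ does not by itself remove the dependence on choice; invoking the axiom of choice is therefore the cleanest route, and that is what I would do.
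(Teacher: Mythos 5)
Your proposal is correct and is essentially the paper's own argument: the paper proves the lemma in one line, ``By the axiom of choice and the $n$-Gendered assumption on $P$,'' which is exactly your nonemptiness-plus-choice construction. The additional finiteness and non-canonicity remarks are fine but not part of the paper's proof.
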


\begin{proof}
By the axiom of choice and the $n$-Gendered assumption on $P$.
\end{proof}

In case $n=2$, one might refer to $1^*$ and $2^*$ as \emph{motherhood} and \emph{fatherhood} maps, in some order.
We will write $i^*j^*$ for $i^*\circ j^*$.

\columnbreak

\[
\begindc{\commdiag}[5]
\obj(-10,10)[aa]{$V\backslash V_0$}
\obj(10,10)[bb]{$E$}
\obj(-10,0)[cc]{$\{i\}$}
\obj(10,0)[dd]{$\{1,\ldots,n\}$}
\mor{aa}{bb}{$(i^*,\mbox{Id})$}
\mor{aa}{cc}{}
\mor{bb}{dd}{$g$}
\mor{cc}{dd}{}[\atright,\injectionarrow]
\enddc\]
\end{multicols}

\begin{definition}
\label{PiSubS}
If $s=(s_1,s_2,\ldots)$ is a periodic sequence on $\{1,\ldots,n\}$, with 
period $p$, and if $u\in V$,
then we define the \emph{$s$-path to $u$}, a finite directed path, written 
$s^*u$, as follows.
\begin{enumerate}
\item If $u\in V_{p-1}$ then $s^*u=(u)$ (the length $0$ path).
\item Otherwise, \[s^*u
= ( s^*_1\cdots s^*_{pq}u, \,\,\, s^*_2\cdots s^*_{pq}u, \,\,\, \ldots, \,\,\, s^*_{pq -1}s^*_{pq}u, \,\,\, s^*_{pq}u, \,\,\, u ),\]
where $q$ is maximal such that 
$s^*_1\cdots s^*_{pq}u$ is defined.
(Some such $q>0$ exists lest $u$ lie in $V_{p-1}$, and such $q$ are 
bounded above due to Assumption A3.)
\end{enumerate}
\end{definition}

For example, suppose $n=2$ and $\{1,2\}=\{M,F\}$.  If $s=(M,M,M,\ldots)$, then $s^*u$ is obtained as follows:
start with $u$, go to $u$'s father, go to \emph{his} father, and so on until a root is reached; then reverse the order
of the resulting path.  If $s=(M,F,F,M,F,F,\ldots)$, then $s^*u$ is obtained by starting at $u$, taking his mother,
taking her mother, taking her father, and repeating this three-step process until too close to a root to continue;
and then reversing the resulting path.

\begin{lemma}
\label{PiSubSWorks}
Let $s$, $p$ be as in Definition~\ref{PiSubS}.  For any $u\in V$, 
$s^*u$ is a finite directed path
starting at a vertex in $V_{p-1}$ and ending at $u$, and if $s^*u=(v_1,\ldots,v_r)$
then for all $1\leq i<r$, $g(v_i,v_{i+1})=s_i$.
\end{lemma}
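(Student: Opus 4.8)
The plan is to proceed by cases on whether $u\in V_{p-1}$. If $u\in V_{p-1}$ then $s^*u=(u)$, which is vacuously a finite directed path starting and ending at $u\in V_{p-1}$ with no edges to check, so the statement is immediate. For the remainder assume $u\notin V_{p-1}$, so that the second clause of Definition~\ref{PiSubS} applies with some maximal $q$. Writing $v_j=s^*_j\cdots s^*_{pq}u$ for $1\le j\le pq$ and $v_{pq+1}=u$, the path in question is $s^*u=(v_1,\ldots,v_r)$ with $r=pq+1$, and I must verify that it is a directed path, that $v_1\in V_{p-1}$, that $v_r=u$, and that $g(v_i,v_{i+1})=s_i$ for $1\le i<r$.

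First I would dispatch the three \emph{local} conditions. Directly from the listing in Definition~\ref{PiSubS}, each vertex satisfies $v_j=s^*_j(v_{j+1})$ for $1\le j\le pq$ (reading $v_{pq+1}=u$). Since $s^*_j$ denotes the map $(s_j)^*$, the defining property of the parent-maps gives $(v_j,v_{j+1})\in E$ and $g(v_j,v_{j+1})=s_j$. This simultaneously shows that consecutive vertices are joined by edges (so $s^*u$ is a genuine finite directed path, its vertices distinct because birthdates strictly increase along each edge by A3), that $v_r=v_{pq+1}=u$, and that the edge labels are exactly $s_1,\ldots,s_{r-1}$, settling every requirement except $v_1\in V_{p-1}$. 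I would also record here why a maximal $q$ exists: the compositions are ancestors of $u$ reached along reverse-directed paths, and by A3 each edge strictly decreases birthdate while $\{v:t(v)\le t(u)\}$ is finite, so reverse-directed paths from $u$ have bounded length; hence the set of admissible $q$ is finite (and nonempty, by the $q\ge1$ claim below).

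The substantive point is $v_1=s^*_1\cdots s^*_{pq}u\in V_{p-1}$, and this is where I expect the real work to lie. The key auxiliary fact is: for any $x\in V$, if $s^*_1\cdots s^*_p x$ is undefined, then $x\in V_{p-1}$. Indeed, undefinedness means that one of the vertices $x,\,s^*_p x,\,s^*_{p-1}s^*_p x,\ldots,s^*_2\cdots s^*_p x$ is a root; if it is the one reached after $j$ parent-steps (so $0\le j\le p-1$), then reversing those steps exhibits a directed path of length $j\le p-1$ from that root to $x$, whence $x\in V_j\subseteq V_{p-1}$. Applying this with $x=u$ gives $q\ge1$, since were $s^*_1\cdots s^*_p u$ undefined we would get $u\in V_{p-1}$, contrary to assumption. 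To finish, I would use the periodicity of $s$ to rewrite the $(q{+}1)$-fold composition: because $s_{p+i}=s_i$, the inner block $s^*_{p+1}\cdots s^*_{p(q+1)}$ equals $s^*_1\cdots s^*_{pq}$, so
\[
s^*_1\cdots s^*_{p(q+1)}u \;=\; s^*_1\cdots s^*_p\bigl(s^*_{p+1}\cdots s^*_{p(q+1)}u\bigr) \;=\; s^*_1\cdots s^*_p(v_1).
\]
By maximality of $q$ the left-hand side is undefined, hence $s^*_1\cdots s^*_p(v_1)$ is undefined, and the auxiliary fact with $x=v_1$ yields $v_1\in V_{p-1}$.

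Thus the only genuine obstacle is this last step: recognizing, via the period-$p$ shift $s_{p+i}=s_i$, that advancing one more block from $v_1$ reproduces the failed $(q{+}1)$-st composition, so that the maximality of $q$ forces a root to appear within $p-1$ parent-steps of $v_1$. Everything else is bookkeeping already built into the definition of $s^*u$.
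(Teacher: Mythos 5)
Your proof is correct and takes essentially the same route as the paper's: the heart of both arguments is the periodicity identity $s^*_1\cdots s^*_p(v_1)=s^*_1\cdots s^*_{p(q+1)}u$ played against the maximality of $q$; you merely run it directly (via your auxiliary fact that undefinedness of a $p$-fold parent composition forces membership in $V_{p-1}$), where the paper runs the contrapositive as a proof by contradiction. The additional bookkeeping you supply for the edges, the gender labels, and the existence of a maximal $q\ge 1$ is exactly what the paper compresses into ``the rest of the lemma is clear.''
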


\begin{proof}
To see that $s^*u$ starts in $V_{p-1}$, write $s^*u=(v_1,\ldots,v_r)$
and assume $v_1\not\in V_{p-1}$.
Then $s^*_1\cdots s^*_p v_1$ is defined.
Let $q$ be as in the definition of $s^*u$.
By periodicity of $s$,
\begin{align*}
s^*_1\cdots s^*_p v_1 &= s^*_1\cdots s^*_p (s^*_1 \cdots s^*_{pq} u)\\
&=
s^*_1\cdots s^*_p s^*_1 \cdots s^*_{pq} u\\
&=
s^*_1\cdots s^*_{p(q+1)} u,
\end{align*}
violating the maximality of $q$.
The rest of the lemma is clear.
\end{proof}

\begin{proposition}
\label{periodicunav}
Every periodic sequence is biologically unavoidable.
In fact, if $s$ is a sequence with period $p$,
it is realized by a path in $P$ which begins at a vertex in $V_{p-1}$.
\end{proposition}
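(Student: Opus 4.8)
The plan is to organize all ``partial realizations'' of $s$ that begin in the finite set $V_{p-1}$ into a finitely branching tree and then invoke K\"onig's Lemma. Concretely, I would let $T$ be the set of finite directed paths $(v_1,\dots,v_r)$ in $P$ (with $r\geq 1$) such that $v_1\in V_{p-1}$ and $g(v_j,v_{j+1})=s_j$ for every $1\leq j<r$, ordered by the prefix relation. Dropping the last vertex of such a path yields another element of $T$ (the prefix still begins in $V_{p-1}$ and still carries the correct genders), so $T$ is closed under initial segments and forms a forest whose roots are the one-vertex paths $(v)$ for $v\in V_{p-1}$; adjoining a single formal root below these makes $T$ a tree. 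An infinite branch of this tree is exactly an infinite sequence $v_1,v_2,\dots$ with each $v_i$ a parent of $v_{i+1}$, each $g(v_i,v_{i+1})=s_i$, and $v_1\in V_{p-1}$, i.e.\ a realization of $s$ beginning in $V_{p-1}$, which is precisely what is claimed.

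Two facts make K\"onig's Lemma applicable. First, $T$ is finitely branching: the formal root has exactly $|V_{p-1}|$ children, finite by Lemma~\ref{Vnfinite}, while any path $(v_1,\dots,v_r)\in T$ can only be extended by appending a child $v_{r+1}$ of $v_r$ with $g(v_r,v_{r+1})=s_r$, and by A2 there are only finitely many such children. Second, $T$ is infinite, and this is where the $s^*$-machinery enters: for each $u\in V$, Lemma~\ref{PiSubSWorks} guarantees that $s^*u$ is a finite directed path beginning in $V_{p-1}$ whose $j$-th edge has gender $s_j$, so $s^*u\in T$. Since $s^*u$ ends at $u$, distinct vertices $u$ yield distinct elements of $T$; because $|V|=\infty$ by A4, the family $\{s^*u:u\in V\}\subseteq T$ is infinite, hence $T$ is infinite.

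With both hypotheses in hand, K\"onig's Lemma produces an infinite branch of $T$, and reading off its vertices gives the desired realization of $s$ starting in $V_{p-1}$. Since an arbitrary periodic sequence has some period $p$, this argument applies verbatim to every periodic sequence, establishing biological unavoidability and the sharper ``begins in $V_{p-1}$'' refinement simultaneously.

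I expect the only genuine obstacle to be the infinitude of $T$. Finite branching is immediate from A1--A2, and the tree/branch bookkeeping is routine, but it is \emph{not} obvious from a naive forward construction that $T$ contains paths of unbounded length: a partial realization can fail to extend because its terminal vertex may have no children at all. The purpose of building the paths $s^*u$ \emph{backward} from arbitrarily many vertices $u$, using the parent-selection maps $i^*$, is exactly to manufacture arbitrarily long members of $T$ that carry the prescribed gender pattern and are guaranteed to start in the finite set $V_{p-1}$. This is the crux that converts the infinitude of $V$ into an infinite tree and lets K\"onig's Lemma finish the argument.
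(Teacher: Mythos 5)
Your proposal is correct and is essentially the paper's own argument: the crux in both is that the backward-built paths $s^*u$ (Lemma~\ref{PiSubSWorks}) supply infinitely many, hence arbitrarily long, partial realizations rooted in the finite set $V_{p-1}$. The paper simply inlines the K\"onig-style extraction as a repeated pigeonhole on the set $\{s^*u\}_{u\in V}$, whereas you invoke K\"onig's Lemma on the prefix tree of all partial realizations; the content is the same.
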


\begin{proof}
Consider the set (call it $X$) of finite directed paths
$\{s^*u\}_{u\in V}$ (note $|X|=\infty$ by Assumption A4).
Each begins (by Lemma\ \ref{PiSubSWorks}) in $V_{p-1}$,
and $V_{p-1}$ is finite by Lemma~\ref{Vnfinite}.
Thus, there is some $u_1\in V_{p-1}$
such that 
infinitely many members (call them $X_1$) of $X$ begin at $u_1$.

Inductively, suppose
I've defined a finite directed path $u_1,\ldots,u_k$
and an infinite set $X_k\subseteq X$ such that
\begin{enumerate}
\item For all $\Pi\in X_k$, the $k$th vertex in $\Pi$ is $u_k$; and
\item $g(u_i,u_{i+1})=s_i$ for all $0<i<k$.
\end{enumerate}

It follows from Assumptions A1 and A2 that $X_k$
contains only finitely many paths of length $\leq k$,
we may assume it contains no paths so short.

Since each path in $X_k$ has $u_k$ as $k$th vertex,
each path in $X_k$ has some child of $u_k$ as$(k+1)$th 
vertex.  By Assumption A2, $u_k$ only has finitely many children.  Thus 
there 
is a child $u_{k+1}$ of $u_k$ such that an infinite subset 
$X_{k+1}$ of paths in $X_k$ have $u_{k+1}$ as $(k+1)$th 
vertex.

In particular, $X_{k+1}$ has at least one path, $s^*v$ 
for some $v\in V$.
By Lemma~\ref{PiSubSWorks},
$g(u_k,u_{k+1})=s_{k}$.

This inductively defines $u_1,u_2,\ldots$ with all the desired properties.
\end{proof}

It is also possible to prove Proposition~\ref{periodicunav}
using the \emph{compactness theorem} from first-order logic.
Compare the proof (see Simpson's book \cite{simpson}) that
weak K\"{o}nig's Lemma is equivalent (over $\mbox{RCA}_0$)
to the compactness theorem.

\begin{theorem}
\label{eventualperiodicimpliesunavoidable}
Every eventually periodic sequence is biologically unavoidable.
\end{theorem}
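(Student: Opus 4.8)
The plan is to mimic the proof of Proposition~\ref{periodicunav}, but to run the pigeonhole argument one block \emph{past} the preperiod --- at the first vertex of the periodic tail --- and to reattach the preperiod only at the very end. Write the eventually periodic sequence as $s=(a_1,\ldots,a_k,c_1,c_2,\ldots)$, where $(c_1,c_2,\ldots)$ is periodic with period $p$ and $a_1,\ldots,a_k\in\{1,\ldots,n\}$ is the preperiod, and abbreviate $a_i^*=(a_i)^*$, $c_j^*=(c_j)^*$ for the parent-selecting maps. In analogy with Definition~\ref{PiSubS}, for each $u\in V$ choose $q=q(u)$ maximal such that the composite backward map
\[
\Phi_q(u)=a_1^*\cdots a_k^*\,c_1^*\cdots c_{pq}^*\,u
\]
is defined (such $q$ exists for every $u\notin V_{k-1}$ and is bounded above by A3, exactly as before), and let $s^*u$ be the path $(v_1,\ldots,v_{k+pq+1})$ with $v_{k+pq+1}=u$ and $v_i=s_i^*\cdots s_{k+pq}^*u$. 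As in Lemma~\ref{PiSubSWorks}, the edge from $v_i$ to $v_{i+1}$ carries gender $s_i$, so $s^*u$ realizes the prefix $s_1,\ldots,s_{k+pq}$ of $s$.

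The key step is to locate the vertex $v_{k+1}(u)=c_1^*\cdots c_{pq}^*u$, the start of the periodic block, inside a single finite set; I claim $v_{k+1}(u)\in V_{k+p-1}$. Write $w=v_{k+1}(u)$ and use periodicity of $(c_j)$ to rewrite $c_1^*\cdots c_{p(q+1)}^*u=c_1^*\cdots c_p^*w$, so that maximality of $q$ says $a_1^*\cdots a_k^*c_1^*\cdots c_p^*w$ is undefined. Either $c_1^*\cdots c_p^*w$ is already undefined, which --- by the same depth bound used in Proposition~\ref{periodicunav}, that a backward step from a vertex outside $V_j$ lands outside $V_{j-1}$ --- forces $w\in V_{p-1}$; or else $w'=c_1^*\cdots c_p^*w$ is defined but $a_1^*\cdots a_k^*w'$ is undefined, forcing $w'\in V_{k-1}$, so that $w$, lying at the end of a directed path of length $p$ out of $w'$, satisfies $w\in V_{k+p-1}$. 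In both cases $w\in V_{k+p-1}$, a finite set by Lemma~\ref{Vnfinite}.

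Now repeat the König construction of Proposition~\ref{periodicunav} verbatim, but beginning at position $k+1$. Among the infinitely many paths $s^*u$ (infinite by A4, after discarding the finitely many that are too short), pigeonhole first on $v_{k+1}(u)\in V_{k+p-1}$ to fix a common value $x_1$, then successively on $v_{k+2}(u),v_{k+3}(u),\ldots$, each a child of the previously fixed vertex and hence ranging over a finite set by A2. This yields an infinite path $x_1,x_2,\ldots$ with $g(x_j,x_{j+1})=c_j$. Finally, since every $s^*u$ in the collection has $\Phi_{q(u)}(u)$ defined, the chain $a_1^*\cdots a_k^*x_1$ is defined, and --- crucially --- the vertices $v_j=a_j^*\cdots a_k^*x_1$ for $1\le j\le k$ depend only on $x_1$, not on $u$. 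Prepending them produces a path $v_1,\ldots,v_k,x_1,x_2,\ldots$ whose consecutive edges carry genders $a_1,\ldots,a_k,c_1,c_2,\ldots$, i.e.\ realizes $s$.

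The main obstacle is precisely the bound $v_{k+1}(u)\in V_{k+p-1}$. Unlike the purely periodic case, the \emph{start} $v_1(u)$ of the path cannot be confined to any finite set, since a shallow vertex may have arbitrarily deep selected parents; hence the pigeonhole cannot be run from the front. Everything hinges on shifting the pigeonhole to the first periodic vertex, which the maximality-of-$q$ case analysis pins inside a fixed finite set, while definedness of $\Phi_{q(u)}$ simultaneously guarantees that the preperiod can be reattached there.
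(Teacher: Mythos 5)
Your proof is correct, but it takes a genuinely different route from the paper's. The paper reduces to the periodic case: it deletes the finite set $V_{k-1}$ to form a subpopulation $P'$, invokes Proposition~\ref{periodicunav} as a black box to get a path in $P'$ realizing the periodic tail, and then back-extends that path through the preperiod (definedness of the back-extension being guaranteed precisely because the path avoids $V_{k-1}$). You instead weave the preperiod into the definition of $s^*u$ from the start, requiring $a_1^*\cdots a_k^*c_1^*\cdots c_{pq}^*u$ to be defined, and you prove a new finiteness lemma --- that the first vertex of the periodic block lands in $V_{k+p-1}$ --- by splitting the failure of maximality into the two cases (periodic step undefined versus preperiodic step undefined); you then run the K\"{o}nig pigeonhole once, globally, anchored at position $k+1$ rather than at position $1$. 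What your version buys is self-containedness and the avoidance of the claim that $P'$ ``remains an infinite gendered population,'' which the paper waves through but which is slightly delicate (a vertex of $V_k\setminus V_{k-1}$ can lose its only gender-$i$ parent without becoming a root of $P'$, so the $n$-Gendered axiom needs a word of repair there). What the paper's version buys is brevity and reuse of Proposition~\ref{periodicunav} verbatim. Your closing observation --- that the pigeonhole cannot be run from the front because $v_1(u)$ is not confined to any finite set, so it must be shifted to the first periodic vertex --- is exactly the right diagnosis of why the na\"{i}ve extension of Definition~\ref{PiSubS} fails and is the common kernel of both arguments.
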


\begin{proof}
If $s$ is eventually periodic, write it as $s=t\frown t'$
where $t$ is finite of length $k$ and $t'$ is periodic.
Let $P'$ be the population obtained from $P$ by discarding all vertices in 
$V_{k-1}$; it's easy to see $P'$ remains an infinite gendered population.
By Proposition~\ref{periodicunav}, there is a directed path in $P'$ 
realizing 
$t'$.
This defines a path $u_1,u_2,\ldots$ in $P$, realizing $t'$, 
and avoiding $V_{k-1}$.
Back-extend this path to
\[
(t^*_1\cdots t^*_k u_1, \,\,\, t^*_2\cdots t^*_k u_1, \,\,\, \ldots, \,\,\, t^*_k u_1, \,\,\, u_1, \,\,\, u_2, \,\,\, \ldots),\]
which realizes $s$ as desired; this is possible because if not, that would 
imply $u_1\in V_{k-1}$.
\end{proof}

This generalizes K\"{o}nig's Lemma for trees, which can be seen as the
$1$-gender case of Theorem~\ref{eventualperiodicimpliesunavoidable}
with the additional constraint that vertices have only one parent.
The following corollary pushes this idea even further.

\begin{corollary}
There is a subset $V_0\subseteq V$, ancestrally closed (whenever $v\in V_0$ and $u$ is an ancestor of $v$ then $u\in V_0$),
such that for every eventually periodic sequence $s\in\{1,\ldots,n\}^{\mathbb{N}}$,
$G$ realizes $s$ with a path $p$ entirely in $V_0$, with the additional property that every vertex in $V_0$ has a descendant on $p$.
\end{corollary}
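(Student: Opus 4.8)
The plan is to produce $V_0$ as a $\subseteq$-minimal infinite ancestrally closed sub-population and to read the covering property off from minimality. The first observation is that the four conditions defining an infinite $n$-gendered population are inherited by \emph{any} ancestrally closed $U\subseteq V$: conditions A1--A3 pass to subsets automatically, while the $n$-Gendered condition survives because ancestral closure keeps, for each non-root $u\in U$, all of $u$'s parents in $P$ (in particular the parents witnessing each gender). Thus $P\rest U$ is again an infinite $n$-gendered population as soon as $U$ is infinite and ancestrally closed, and Theorem~\ref{eventualperiodicimpliesunavoidable} applies to it. The whole problem therefore reduces to choosing $U=V_0$ so that the realizing paths it produces sweep out all of $V_0$, and I will arrange this through a ``no slack'' minimality property.

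The key lemma I would prove is a compactness statement: if $\{U_\alpha\}$ is a chain (under $\supseteq$) of infinite ancestrally closed sub-populations, then $\bigcap_\alpha U_\alpha$ is still infinite. First note that every infinite ancestrally closed $U$ contains an infinite forward path beginning at a root, since a shortest root-to-$u$ path has all its vertices among the ancestors of $u$ and hence lies inside $U$; the tree of finite root-initial forward paths inside $U$ is therefore infinite and, by A1 and A2, finitely branching, so König's Lemma gives an infinite branch. Now let $\Omega$ be the space of infinite forward paths from roots of $P$; by A1 and A2 it is the branch space of a finitely branching tree, hence compact, and for each $\alpha$ the set $P_\alpha\subseteq\Omega$ of paths lying entirely in $U_\alpha$ is closed and (by the previous sentence) nonempty. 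A chain has the finite intersection property, so compactness yields a path in $\bigcap_\alpha P_\alpha$, i.e.\ an infinite path inside $\bigcap_\alpha U_\alpha$, proving that intersection infinite. With this lemma, Zorn's Lemma applied to the poset of infinite ancestrally closed sub-populations ordered by $\supseteq$ (nonempty, since $V$ itself qualifies) furnishes a $\subseteq$-minimal such set $V_0$.

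It remains to extract the covering property from minimality. I claim $V_0$ is \emph{end-irreducible}: for every $w\in V_0$, every infinite forward path in $V_0$ eventually enters the set of strict descendants of $w$. Indeed, if some infinite forward path $\pi\subseteq V_0$ avoided every strict descendant of $w$, then $A=\{v\in V_0: v \text{ is an ancestor of, or equal to, some vertex of } \pi\}$ would be infinite (it contains $\pi$), ancestrally closed, and a proper subset of $V_0$: it omits $w$, because $w\in A$ would force some vertex of $\pi$ to be a strict descendant of $w$. This contradicts minimality. Granting this, apply Theorem~\ref{eventualperiodicimpliesunavoidable} to $P\rest V_0$: for each eventually periodic $s$ it produces a realizing path $p=p_s$, which is an infinite forward path in $V_0$. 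End-irreducibility applied to each $w\in V_0$ shows $p$ contains a strict descendant of $w$, that is, every vertex of $V_0$ has a descendant on $p$. Since $V_0$ is ancestrally closed by construction, this is exactly the assertion of the corollary.

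The main obstacle is the compactness lemma and the ensuing minimality argument: this is where König's Lemma is genuinely pushed further, since one must guarantee that thinning the population down to a single end never collapses it to a finite set. The verification that ancestrally closed subsets inherit the population axioms, and that end-irreducibility yields the covering property, are then routine. One subtlety to keep straight is that minimality is taken within the class of infinite ancestrally closed subsets; it is precisely the automatic inheritance of the $n$-Gendered condition that lets me ignore genderedness while minimizing and reinstate it for free at the end.
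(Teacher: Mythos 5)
Your argument is correct, but it is a genuinely different route from the paper's: the paper disposes of this corollary in one line by citing Theorem 3 and Proposition 5 of Alexander's \emph{Infinite graphs in systematic biology}, i.e., it outsources the entire combinatorial content (the existence of a minimal ancestrally closed infinite subpopulation and the covering property of its infinite paths) to an external reference, whereas you reconstruct that content from scratch. Your decomposition --- (i) ancestrally closed infinite subsets inherit all the population axioms, (ii) a compactness/K\"onig argument showing that a $\supseteq$-chain of such subsets has infinite intersection, hence Zorn gives a $\subseteq$-minimal one, (iii) minimality forces end-irreducibility, which yields the covering property for the realizing paths supplied by Theorem~\ref{eventualperiodicimpliesunavoidable} --- is sound at every step; in particular the contradiction in (iii) correctly handles both the case $w$ a strict ancestor of a vertex of $\pi$ and the case $w\in\pi$ (where the successor of $w$ on $\pi$ is the offending strict descendant). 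What your approach buys is a self-contained proof readable without the companion paper; what it costs is the transfinite machinery (Zorn plus compactness of the branch space), which the paper hides behind the citation. One small imprecision worth fixing: A1 does \emph{not} pass to arbitrary subsets (an infinite antichain would have infinitely many parentless vertices); it is ancestral closure that guarantees the roots of $P\rest U$ are exactly the roots of $P$ lying in $U$, just as it is ancestral closure that rescues the $n$-Gendered condition. You use this fact correctly elsewhere, so this is a matter of wording rather than a gap.
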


\begin{proof}
By Theorem~\ref{eventualperiodicimpliesunavoidable} above, combined with Theorem 3 and Proposition 5 from Alexander \cite{alexander}.
\end{proof}

Using Theorem~\ref{eventualperiodicimpliesunavoidable} we can give a game-theoretical characterization
of unavoidable sequences using the notion of \emph{guessability} discovered by Wadge \cite{wadge} (pp.~141--145)
(and independently by Alexander \cite{alexanderguessing}).
Let $s$ be a sequence on $\{1,\ldots,n\}$.  In the game $G_s$, $I$ starts by playing an infinite $n$-gendered
population $P$.  Thereafter, $II$ plays a path $p$ in $P$ (one vertex per turn) and $I$ tries to guess
(making one guess per turn)
whether $p$'s genders have the form $t\frown s$ for some finite $t$.
$I$ wins if $I$'s guesses converge to the correct answer, $II$ wins otherwise.
We leave it an exercise that $I$ has a winning strategy iff $s$ is biologically avoidable.
(This holds whether or not $II$ can see $I$'s guesses.)

\section{Existence of Biologically Avoidable Sequences}
\label{existencesection}

One might hope to cleverly generalize the argument from the previous 
section to non-periodic gender sequences.  In this section we'll show 
that's impossible.  There are sequences which are biologically avoidable.
Populations lacking certain gender sequences are analogous to Aronszajn 
trees (first introduced in \cite{kurepa}) in the sense that both provide
counterexamples to plausible-seeming generalizations of K\"{o}nig's Lemma.

In this and the next section we restrict attention to populations
with gendered vertices, implicitly gendering edges according to their 
initial vertices.

\begin{definition}
Suppose $P=(V,E)$ is an infinite $n$-gendered population, 
$s=(s_1,\ldots,s_k)$ is a
finite 
sequence,
$k>0$,
and $V$ is partitioned into \emph{heights} $H_1,H_2,\ldots$.
We say that 
$s$ is 
\emph{impossible in $P$ at height $k$} if there is no finite 
directed path $v_1,\ldots,v_k$, gendered by $s$,
with $v_1\in H_k$.
\end{definition}

\begin{lemma}
\label{abstractnonsense}
Suppose $s$ is an infinite $\{1,\ldots,n\}$-sequence.
If there is an infinite $n$-gendered population $P=(V,E)$ and a partition 
of $V$ 
into heights $H_1,H_2,\ldots$ such that for every $k>0$,
some finite restriction of $s$ is impossible in
$P$ at 
height $k$, then $s$ is biologically avoidable.
\end{lemma}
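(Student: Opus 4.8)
The plan is to show that the population $P$ \emph{already} witnesses the avoidability of $s$: namely, $P$ is an infinite $n$-gendered population that does not realize $s$, which is exactly what ``biologically avoidable'' requires. So no new population needs to be built. The whole content of the lemma is that the height-wise impossibility hypothesis rules out any realizing path in $P$ itself, and I would prove this by a short contradiction.

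First I would suppose, toward a contradiction, that $P$ \emph{does} realize $s$. By definition this yields an infinite directed path $v_1, v_2, \ldots$ in $P$ with $g(v_i, v_{i+1}) = s_i$ for every $i$. Since the heights $H_1, H_2, \ldots$ partition $V$, the initial vertex $v_1$ lies in exactly one of them; let $k$ be the index with $v_1 \in H_k$. Next I would apply the hypothesis at this particular $k$: it supplies a finite restriction of $s$, say an initial segment $(s_1, \ldots, s_\ell)$, that is impossible in $P$ at height $k$ -- i.e.\ no finite directed path beginning in $H_k$ carries the genders prescribed by that restriction. But truncating the realizing path $v_1, v_2, \ldots$ to its appropriate initial segment produces precisely such a path: it starts at $v_1 \in H_k$ and its consecutive edges are gendered by $s_1, s_2, \ldots$ as far as required. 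This contradicts the impossibility of $(s_1, \ldots, s_\ell)$ at height $k$, so $P$ cannot realize $s$, and $s$ is therefore biologically avoidable.

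The argument is essentially immediate once the definitions are unwound, so there is no real combinatorial obstacle. The only point demanding care is the length bookkeeping: the definition of ``impossible at height $k$'' couples the height index with the length of the forbidden sequence and the number of vertices of the forbidden path, so I must read off the truncation of $v_1, v_2, \ldots$ whose length matches exactly that of the restriction furnished by the hypothesis. It is also worth noting that the heights enter only to locate $v_1$: whatever height $v_1$ occupies, the hypothesis hands us an impossible initial segment for that height, and the purported realizing path is blocked at its very start. (The genuine difficulty of this section lies not here, but in the later task of constructing, for a cleverly chosen $s$, an actual population and height partition satisfying this hypothesis.)
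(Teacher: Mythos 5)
Your proposal is correct and is essentially the paper's own argument: assume a realizing path exists, locate the height $H_k$ of its first vertex, and contradict the impossibility of the finite restriction furnished for that $k$. Your added remark about matching the truncation length to the restriction is a fair reading of the definition's bookkeeping, but it does not change the argument.
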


\begin{proof}
If $s$ were realized by $P$, it would be realized by some path, starting 
with a vertex in some height $H_k$, yet $s$ would have some finite 
restriction 
impossible at height $k$, impossible.
So $s$ is not realized by $P$, so $s$ is biologically avoidable.
\end{proof}


We will now define a specific family of infinite $2$-gendered populations 
(generalizing an example suggested by Timothy J.\ Carlson)
designed to take advantage of Lemma~\ref{abstractnonsense}.
Let $\{M,F\}=\{1,2\}$, we will refer to $M$-gendered vertices as 
\emph{males}, $F$-gendered vertices as \emph{females}, and adopt 
terminology such as \emph{son}, \emph{daughter} with the obvious meanings.

\begin{definition}
\label{carlsontree}
Suppose $h:\N^+\to\N^+$.
The infinite $2$-gendered population $T_h$ is defined as follows (Figure 1 
shows $T_{n\mapsto n}$).
The vertices of $T_h$ are partitioned into successive \emph{generations} 
$G_1,G_2,\ldots$, the $n$th generation consisting of
$h(n)$ males $m^n_1,\ldots,m^n_{h(n)}$
and $h(n)$ females $f^n_1,\ldots,f^n_{h(n)}$.
These vertices are given birthdates, $v\mapsto t(v)$, so that vertices in 
$G_i$ are born 
before those in $G_j$ whenever $i<j$, and within each generation $G_n$, 
$\max\{t(m^n_i),t(f^n_i)\}<\min\{t(m^n_j),t(f^n_j)\}$
whenever 
$i<j$.
Edges are defined as follows.
\end{definition}

\setlength{\columnsep}{.5in}
\begin{multicols}{2}

\begin{itemize}
\item $m^1_1$ and $f^1_1$ have no parents.
\item $\forall n>0$, $m^{n+1}_1$ has parents $m^n_{h(n)}$ and $f^n_1$.
\item $\forall n>0$, $f^{n+1}_1$ has parents $f^n_{h(n)}$ and $m^n_1$.
\item $\forall n>0$, $\forall 0<i<h(n)$,
$m^n_{i+1}$ has parents $m^n_i$ and $f^n_1$.
\item $\forall n>0$, $\forall 0<i<h(n)$,
$f^n_{i+1}$ has parents $f^n_i$ and $m^n_1$.
\end{itemize}

\columnbreak

\vphantom{A}

\begin{center}
\resizebox*{7cm}{1.701cm}{\input{carlsonsmalltest.supplement}}
\end{center}
\vspace{-.75cm}
\begin{quote}
Figure 1:
The infinite $2$-gendered population $T_{n\mapsto n}$.
Solid and open vertices correspond to males and females,
not necessarily in that order.
\end{quote}

\end{multicols}

\begin{lemma}
\label{lefttoreader}
Let $h:\N^+\to\N^+$.
\begin{enumerate}
\item $T_h$ is an infinite $2$-gendered population with roots $m^1_1$ and 
$f^1_1$.
\item If $m^n_i$ has a daughter, or $f^n_i$ has a son, then $i=1$.
\item No edge in $T_h$ skips an entire generation: if an edge has initial 
vertex in $G_i$, then it has terminal vertex in either $G_i$ or $G_{i+1}$.
\item If $B=(v_1,\ldots,v_n)$ is a finite directed path of males from 
$T_h$, such that $v_1\in G_i$ and $v_n\in G_j$, then for every $i<k<j$, 
$B$ contains all the males in $G_k$.
\end{enumerate}
\end{lemma}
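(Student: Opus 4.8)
The plan is to observe that all four parts follow by directly inspecting the edge set prescribed in Definition~\ref{carlsontree}, the only genuinely structural point being part~(4). The organizing observation I would record first is that, within each generation $G_n$, the males form a single chain $m^n_1\to m^n_2\to\cdots\to m^n_{h(n)}$ via the edges $(m^n_i,m^n_{i+1})$, the females likewise form a chain $f^n_1\to\cdots\to f^n_{h(n)}$, the only \emph{cross} edges inside $G_n$ emanate from $m^n_1$ (to every later female) and from $f^n_1$ (to every later male), and the only edges leaving $G_n$ run from $m^n_{h(n)},f^n_1$ into $m^{n+1}_1$ and from $f^n_{h(n)},m^n_1$ into $f^{n+1}_1$.

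For part~(1) I would check the axioms of Definition~\ref{populations} against this list. Parentlessness singles out exactly $m^1_1,f^1_1$, giving A1 and identifying the roots; each named vertex appears as a parent only finitely often, giving A2; choosing the birthdates to increase without bound along the generation-then-index order makes every sublevel $\{v:t(v)\le r\}$ finite and every edge time-increasing, giving A3; and A4 is clear since $h\ge 1$ on an infinite index set. The $2$-gendered condition is the one line worth spelling out: reading the five bullet rules, every non-root is assigned one parent of each gender (e.g.\ $m^n_{i+1}$ gets male parent $m^n_i$ and female parent $f^n_1$), which I would verify case by case. Part~(2) is then immediate from the same list: the \emph{only} male ever named as a parent of a female is $m^n_1$, and the only female ever named as a parent of a male is $f^n_1$, so a daughter of $m^n_i$ or a son of $f^n_i$ forces $i=1$. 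Part~(3) is equally direct, since by inspection every edge either stays inside some $G_n$ or passes from $G_n$ to $G_{n+1}$.

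Part~(4) is the substantive step. The key sublemma I would establish is that \emph{every} male vertex has exactly one male child, namely $m^n_i\mapsto m^n_{i+1}$ for $i<h(n)$ and $m^n_{h(n)}\mapsto m^{n+1}_1$ (and, dually, a unique male parent except for $m^1_1$). Hence the male vertices, ordered by this successor map, constitute a single one-way infinite directed path $m^1_1\to\cdots\to m^1_{h(1)}\to m^2_1\to\cdots$ which enters each generation $G_k$ at $m^k_1$ and leaves it at $m^k_{h(k)}$. Any directed path of males is forced to follow this successor map, since given a male $v_\ell$ its male successor $v_{\ell+1}$ is uniquely determined; thus $B$ is a contiguous segment of the global male path. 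Running from a vertex of $G_i$ to a vertex of $G_j$ along that segment must therefore traverse, for each intermediate $k$, the entire block $m^k_1,\ldots,m^k_{h(k)}$, i.e.\ all the males of $G_k$.

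The only places demanding care rather than bookkeeping are the degenerate generations with $h(n)=1$ (where $m^n_1=m^n_{h(n)}$, so the within-generation male chain is trivial and its sole male edge is the transition $m^n_1\to m^{n+1}_1$) and the A3 requirement that birthdates not accumulate; both are dispatched by the remarks above. I therefore expect no real obstacle, the content being concentrated in the uniqueness-of-male-child observation that drives part~(4).
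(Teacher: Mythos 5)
Your proposal is correct; the paper leaves this lemma entirely to the reader, and your argument is exactly the direct verification that is intended. The one piece of genuine content — that every male vertex has a unique male child, so the males form a single infinite directed path traversing each generation's males in order, forcing any all-male path to be a contiguous segment of it — is identified and proved correctly, and your handling of the degenerate $h(n)=1$ case and of axiom A3 is sound.
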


\begin{proof}
Left to the reader.
\end{proof}

\begin{definition}
When a function $h:\N^+\to\N^+$ is clear from 
context, we let $\widehat{m}_1,\widehat{m}_2,\ldots$
denote the males in $T_h$ (over all the generations), ordered ascending by 
birthdate.  Similarly for $\widehat{f}_1,\widehat{f}_2,\ldots$.
We partition $T_h$ into \emph{heights} $H_1,H_2,\ldots$
by letting each $H_i=\{\widehat{m}_i,\widehat{f}_i\}$.
\end{definition}

The following technical lemma should be compared
and contrasted with Cauchy's polygonal 
number theorem (see \cite{nathanson}) which states that every positive 
integer can be written as a sum of $n$ $n$-gonal numbers, for any $n\geq 
3$.  For example, every natural number has the form 
$\sum_{p=1}^{b_1}p+\sum_{p=1}^{b_2}p+\sum_{p=1}^{b_3}p$.
Also worth comparing is the work \cite{cantor} of D.~Cantor and 
B.~Gordin, and more recently \cite{gupta} of S.~Gupta.

\begin{lemma}
\label{techlemma}
Suppose $h:\N^+\to\N^+$ and $\lim_{n\to\infty} h(n)=\infty$.
For every $u\in\N$, there is some positive integer $e$ which is \emph{not} 
of the form
\[
a+\sum_{p=1}^b h(c+p)
\]
for any $a,b,c\in\N$ with $c\leq u$ and $a\leq\max\{h(1),\ldots,h(u)\}$.
\end{lemma}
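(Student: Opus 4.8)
The plan is to fix $u$ and count, for large $N$, how many positive integers in $\{1,\ldots,N\}$ can be written in the given form; if this count is strictly less than $N$, then some positive integer $e\le N$ is not representable. Write $M=\max\{h(1),\ldots,h(u)\}$ (a constant depending only on $u$), and for $c\le u$ and $b\ge 0$ set $S(c,b)=\sum_{p=1}^{b}h(c+p)$, so that $S(c,0)=0$. The representable integers are exactly the numbers $a+S(c,b)$ with $c\le u$, $0\le a\le M$, and $b\ge 0$. For each fixed pair $(c,b)$, letting $a$ run over $\{0,\ldots,M\}$ produces the block of $M+1$ consecutive integers $\{S(c,b),\ldots,S(c,b)+M\}$. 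Thus the representable set is a union of length-$(M+1)$ blocks indexed by $(c,b)$, and a representable integer can be $\le N$ only if its block satisfies $S(c,b)\le N$.

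First I would count the blocks that meet $\{1,\ldots,N\}$. All pairs with $b=0$ give the single block $\{0,\ldots,M\}$, contributing at most $M$ positive integers. For $b\ge 1$ I would bound $B(c,N):=\#\{b\ge 1: S(c,b)\le N\}$. Here is where $\lim_{n}h(n)=\infty$ enters: given any $K$, choose $n_K$ with $h(n)\ge K$ for all $n\ge n_K$; then among the terms $h(c+1),\ldots,h(c+b)$ at most $n_K$ have index below $n_K$, so $S(c,b)\ge K(b-n_K)$, whence $S(c,b)\le N$ forces $b\le N/K+n_K$ and therefore $B(c,N)\le N/K+n_K$. Summing over the $u+1$ admissible values of $c$ gives a total block count $T(N):=\sum_{c=0}^{u}B(c,N)\le (u+1)(N/K+n_K)$.

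Combining these bounds, the number of representable positive integers in $\{1,\ldots,N\}$ is at most $M+(M+1)T(N)\le M+(M+1)(u+1)(N/K+n_K)$. Now I would choose $K$ once and for all with $K>2(M+1)(u+1)$, so that the coefficient of $N$ is smaller than $1/2$, and then take $N$ large enough that $M+(M+1)(u+1)n_K<N/2$. For such $N$ the representable positive integers in $\{1,\ldots,N\}$ number strictly fewer than $N$, so by pigeonhole some $e\in\{1,\ldots,N\}$ is not representable, which is exactly the desired conclusion.

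The substance of the argument, and the step I expect to be the main obstacle, is the uniform density estimate $T(N)=o(N)$, i.e.\ controlling how many partial sums $S(c,b)$ can fall below $N$. A naive gap argument (for fixed $c$ the sums $S(c,b)$ eventually jump by more than $M$, leaving holes) does not immediately produce a non-representable integer, because the $u+1$ families of blocks for different $c$ can interleave and plug one another's gaps; the counting/density formulation is what sidesteps this interleaving cleanly. The only delicate point in the estimate is extracting the linear-in-$b$ lower bound $S(c,b)\ge K(b-n_K)$ from $h\to\infty$ \emph{uniformly} in $c\le u$, which the single threshold $n_K$ handles since only finitely many indices lie below $n_K$ regardless of $c$.
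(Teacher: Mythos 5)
Your argument is correct, and the estimates all check out: the uniform lower bound $S(c,b)\ge K(b-n_K)$ is valid because the threshold $n_K$ depends only on $h$ and not on $c$, the block decomposition correctly accounts for every representable integer, and the final choice of $K>2(M+1)(u+1)$ followed by $N$ large makes the count of representable integers in $\{1,\dots,N\}$ strictly less than $N$. Your route is, however, genuinely different from the paper's. The paper does not count globally up to $N$; it places a single short window $X$ of fixed length $(u+1)(A+1)+1$ just beyond a threshold $M$ chosen so that, past $M$, consecutive partial sums within one family (same $a$, same $c$) differ by more than the length of $X$. A pigeonhole argument then shows each of the $u+1$ values of $c$ contributes at most $A+1$ representable integers to $X$, so $X$ contains a non-representable $e$. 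In effect the paper needs only that the gaps $h(c+b)$ eventually exceed one fixed constant, whereas you need the stronger (but still free, given $h\to\infty$) fact that the partial sums grow with arbitrarily large slope, which is what drives your density estimate $T(N)\le (u+1)(N/K+n_K)$. The paper's version is more economical and localizes the non-representable integer to an explicit short interval; yours is the more standard density formulation and, as you note, cleanly sidesteps the worry about the $u+1$ families interleaving to plug one another's gaps --- a worry the paper handles instead by making the window so short that interleaving cannot cover all of it.
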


\begin{proof}
Let $A=\max\{h(1),\ldots,h(u)\}$.
Since $\lim_{n\to\infty}h(n)=\infty$, there is some $M_0$ so big that 
$h(c+M)>(u+1)(A+1)$ whenever $M\geq M_0$ and $0\leq c\leq u$.
Let
\[
M=\max\left\{ a+\sum_{p=1}^b h(c+p)\,:\, a\leq A, c\leq u, b\leq M_0
\right\}
\]
and let $X=\{M+1,M+2,\ldots,M+(u+1)(A+1)+1\}$.

I claim that for every $c\leq u$, $X$ contains at most $A+1$ different 
numbers of the form $a+\sum_{p=1}^b h(c+p)$ with $a\leq A$.
If not, by the pigeonhole principle there is some particular $a\leq A$
and some $b_1<b_2$ such that $a+\sum_{p=1}^{b_1}h(c+p)$ and
$a+\sum_{p=1}^{b_2}h(c+p)$ are both in $X$, let $d$ be their difference.
Then $d=h(c+b_1+1)+\cdots+h(c+b_2)\geq h(c+b_2)$.
Since $a+\sum_{p=1}^{b_2}h(c+p)>M$ (by virtue of being in $X$), by 
definition of $M$ this implies 
$b_2\geq M_0$, whereby $d\geq h(c+b_2)>(u+1)(A+1)$.  This is absurd: $X$ 
is 
made up of $(u+1)(A+1)+1$ consecutive points, no two of them can have a 
difference $>(u+1)(A+1)$.  The claim is proved.

Given the above claim, the number of numbers in $X$ with the form 
$a+\sum_{p=1}^b h(c+p)$, $a\leq A$, $c\leq u$, is at most $(u+1)(A+1)$: 
$u+1$ choices for $c$, times $A+1$ numbers of the given form for each $c$.
Since $|X|>(u+1)(A+1)$, $X$ contains an $e$ as desired.
\end{proof}

\begin{proposition}
\label{stepbystep}
If $h:\N^+\to\N^+$, $\lim_{n\to\infty} h(n)=\infty$,
$k>0$, and $s$ is any finite $\{M,F\}$-sequence,
there is some $e>0$
such that $s\frown M^eF$ is impossible in $T_h$ at height $k$.
\end{proposition}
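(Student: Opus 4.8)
The plan is to show that whenever a directed path whose vertices are gendered by $s\frown M^eF$ starts at a vertex of $H_k$, the exponent $e$ is forced into the restricted arithmetic family appearing in Lemma~\ref{techlemma}; any $e$ avoiding that family---one of which Lemma~\ref{techlemma} guarantees---then witnesses impossibility at height $k$. Put $\ell=|s|$ and, aiming at the impossibility statement, suppose $v_1,\dots,v_{\ell+e+1}$ is a directed path with $g(v_i)=(s\frown M^eF)_i$ and $v_1\in H_k$. The final block is gendered $M^eF$, so the vertices $w_0:=v_{\ell+1},\dots,w_{e-1}:=v_{\ell+e}$ are male and $w_e:=v_{\ell+e+1}$ is female. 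Thus $w_{e-1}$ is a male with a female child, and Lemma~\ref{lefttoreader}(2) forces $w_{e-1}=m^n_1$ for some $n$.

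Because every non-root male has a unique male parent, the males of $T_h$ form one birthdate-ordered chain (the mechanism behind Lemma~\ref{lefttoreader}(4)); hence the male run $w_0\to\cdots\to w_{e-1}=m^n_1$ is precisely the $e$-term back-segment of that chain terminating at $m^n_1$. With $S(n):=\sum_{p=1}^{n-1}h(p)$ we have $m^n_1=\widehat m_{1+S(n)}$, so $w_0=\widehat m_{2+S(n)-e}$. Writing $w_0=m^{g_0}_i$ as the $i$-th male of its generation $g_0$ (so $1\le i\le h(g_0)$), this reads
\[
e \;=\; 2-i+\sum_{p=g_0}^{\,n-1}h(p).
\]

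To control the parameters I would bound $g_0$. The start $v_1\in H_k$ sits in a single fixed generation $u$ (the generation of the $k$-th male, which coincides with that of the $k$-th female), and by Lemma~\ref{lefttoreader}(3) no edge skips a generation, so after the $\ell$ edges of the $s$-block the vertex $w_0$ lies in generation $g_0\le U:=u+\ell$. Put $A=\max\{h(1),\dots,h(U)\}$. It then remains to rewrite the displayed value of $e$ as $a+\sum_{p=1}^{b}h(c+p)$ with $c\le U$ and $a\le A$: when $i=1$ take $a=1$, $c=g_0-1$, $b=n-g_0$; when $i\ge 2$ (which forces $n>g_0$, since nothing in generation $g_0$ precedes $m^{g_0}_1$) split off the leading summand and take $a=h(g_0)-i+2\in\{2,\dots,h(g_0)\}$, $c=g_0$, $b=n-1-g_0$. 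In either case $0\le c\le U$, $0\le b$, and $1\le a\le A$.

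Finally I would invoke Lemma~\ref{techlemma} with its parameter instantiated at $U$: it yields a positive integer $e$ representable by no $a+\sum_{p=1}^{b}h(c+p)$ with $c\le U$ and $a\le A$. For this $e$ the two preceding paragraphs show no $s\frown M^eF$-gendered path can begin in $H_k$, i.e.\ $s\frown M^eF$ is impossible in $T_h$ at height $k$. The main obstacle is the third step: the quantity $2-i$ can be non-positive, so the reachable exponents only fall inside the nonnegative-parameter family of Lemma~\ref{techlemma} after the case split that absorbs $h(g_0)$, and one must check that the generation bound $g_0\le U$ holds uniformly over all candidate paths (which is exactly where the ``no edge skips a generation'' property of Lemma~\ref{lefttoreader}(3) is used).
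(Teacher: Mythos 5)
Your proof is correct and follows essentially the same route as the paper's: choose $e$ outside the arithmetic family of Lemma~\ref{techlemma} (with parameter $\ell+k$, or your $U\le \ell+k$), use Lemma~\ref{lefttoreader}(2) to force the male block to end at some $m^n_1$ and Lemma~\ref{lefttoreader}(3) to bound the generation of its first vertex, and conclude that the block's length must lie in that family, contradicting the choice of $e$. The only real difference is cosmetic: where the paper counts the males generation by generation via Lemma~\ref{lefttoreader}(4), you read off an exact formula for $e$ from the single male chain and then case-split on whether the block starts at the first male of its generation; both computations land in the same representable set.
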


\begin{proof}
We may assume $s$ nonempty.
We may also assume the first gender in $s$ is $M$, the other case being 
similar.
Let $\ell=\mbox{length}(s)$.
By Lemma~\ref{techlemma},
there is some $e>1$ such that $e-1$ is not of the form
$a+\sum_{p=1}^b h(c+p)$ for any $a,b,c\in\N$ with $c\leq \ell+k$
and $a\leq \max\{h(1),\ldots,h(\ell+k)\}$.
Thus $e$ itself is not of the form $a+1+\sum_{p=1}^b h(c+p)$
for any such $a,b,c$.
We will show $s\frown M^eF$ is impossible in $T_h$ at height $k$.
Suppose not: suppose there is a finite directed path 
$v_0,\ldots,v_{\ell},v_{\ell+1},\ldots,v_{\ell+e}$ in $T_h$, gendered by 
$s\frown M^eF$, with $v_0=\widehat{m}_k$.

We would like to estimate in which
generation does $v_{\ell}$, the first vertex 
corresponding to the $M^eF$ block, lie.  We will be content with an 
overestimate.
By Lemma~\ref{lefttoreader} part 3, every edge either ends in the same 
generation where it began, or at most one generation further.
Thus $v_0=\widehat{m}_k$ is in at most the $k$th generation, and 
$v_{\ell}$ is in at most the $(k+\ell)$th generation.

I claim $e$ has the form $a+1+\sum_{p=1}^b h(c+p)$ for some $a,b,c\in\N$ 
with $c\leq \ell+k$, $a\leq \max\{h(1),\ldots,h(\ell+k)\}$,
a contradiction.
Let $i,j$ be such that $v_{\ell}$ is in $G_i$ (so $i\leq k+\ell$)
and $v_{\ell+e-1}$ is in $G_j$.
Since $v_{\ell+e}$ is female, and $v_{\ell+e-1}$ is male, by 
Lemma~\ref{lefttoreader} part 2, $v_{\ell+e-1}$ must be $m^j_1$.
Thus $B=(v_{\ell},\ldots,v_{\ell+e-1})$ is a finite directed
path of male vertices beginning with $m^i_x$ for some $x$ and ending with 
$m^j_1$.
All of these males lie in generations between $i$ and $j$ inclusive,
and for every $i<p<j$, Lemma~\ref{lefttoreader} part 4 says that $B$ 
contains all the males in $G_p$.  Let us count the vertices in $B$:
\begin{enumerate}
\item The number of males from $G_i$ included in $B$ is at most all of 
them, so $B$ has $\leq h(i)$ vertices from $G_i$.
\item For $i<p<j$, $B$ contains all $h(p)$ males of $G_p$.
This is a total of
\[
h(i+1)+\cdots+h(i+(j-i-1))=\sum_{p=1}^{j-i-1}h(i+p)\]
vertices.
\item $B$ has exactly one vertex from $G_j$, namely $m^j_1$.
\end{enumerate}
Thus, the number $e$ of vertices in $B$ has the form $a+1+\sum_{p=1}^b 
h(c+p)$ for some natural $c\leq i\leq \ell+k$, some $a\leq 
\max\{h(1),\ldots,h(\ell+k)\}$,
and some natural $b$, the desired contradiction.
\end{proof}

\begin{corollary}
\label{thereisanunavoidable}
There is a biologically avoidable sequence.
\end{corollary}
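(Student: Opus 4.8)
The plan is to invoke Lemma~\ref{abstractnonsense} with a single fixed population, namely $T_h$ for some $h$ with $\lim_{n\to\infty}h(n)=\infty$ (for concreteness $h(n)=n$, the population $T_{n\mapsto n}$ of Figure~1), equipped with the height partition $H_i=\{\widehat{m}_i,\widehat{f}_i\}$. According to that lemma it suffices to exhibit one infinite $\{M,F\}$-sequence $s$ with the property that for \emph{every} $k>0$ some finite restriction of $s$ is impossible in $T_h$ at height $k$. I will build such an $s$ as the increasing union of a chain of finite prefixes $s^{(0)},s^{(1)},s^{(2)},\ldots$, feeding one height at a time into Proposition~\ref{stepbystep}.

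Concretely, start the chain with $s^{(0)}=(M)$ (any fixed nonempty word will do). Given the finite sequence $s^{(k-1)}$, apply Proposition~\ref{stepbystep} with $s^{(k-1)}$ in the role of $s$ and with the height $k$; this produces an $e_k>0$ for which $s^{(k-1)}\frown M^{e_k}F$ is impossible in $T_h$ at height $k$. Set $s^{(k)}=s^{(k-1)}\frown M^{e_k}F$. Since each appended block $M^{e_k}F$ has length $e_k+1\geq 2$, the prefixes $s^{(k)}$ strictly increase in length, so their union is a genuine infinite sequence $s\in\{M,F\}^{\mathbb N}$, and every $s^{(k)}$ is a finite restriction of $s$. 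By the very choice of $e_k$, for each $k>0$ the restriction $s^{(k)}$ is impossible in $T_h$ at height $k$, so the hypothesis of Lemma~\ref{abstractnonsense} holds and $s$ is biologically avoidable.

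The one point that superficially looks like an obstacle is whether appending later blocks $M^{e_j}F$ ($j>k$) might spoil an impossibility already secured at height $k$. It cannot: ``impossible in $T_h$ at height $k$'' is a property of the finite word $s^{(k)}$ alone---it asserts the nonexistence of a directed path gendered by $s^{(k)}$ whose first vertex lies in $H_k$---and so it is untouched by whatever is glued on afterward. Hence the construction is monotone with no interference between stages, and all of the genuine difficulty has already been absorbed into Proposition~\ref{stepbystep} and, beneath it, the counting argument of Lemma~\ref{techlemma}. The corollary is then simply the diagonal assembly of these blocks.

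\begin{proof}
Fix $h$ with $\lim_{n\to\infty}h(n)=\infty$, say $h(n)=n$, and partition $T_h$ into heights $H_i=\{\widehat{m}_i,\widehat{f}_i\}$. Define a chain of finite $\{M,F\}$-words by $s^{(0)}=(M)$ and, for $k\geq 1$, $s^{(k)}=s^{(k-1)}\frown M^{e_k}F$, where $e_k>0$ is chosen by Proposition~\ref{stepbystep} (applied to the word $s^{(k-1)}$ and the height $k$) so that $s^{(k-1)}\frown M^{e_k}F$ is impossible in $T_h$ at height $k$. Each block $M^{e_k}F$ has length at least $2$, so the words $s^{(k)}$ have strictly increasing lengths and their union is an infinite sequence $s\in\{M,F\}^{\mathbb N}$ having every $s^{(k)}$ as a finite restriction. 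For each $k>0$, the restriction $s^{(k)}$ is impossible in $T_h$ at height $k$ by construction. Thus $T_h$, with this height partition, satisfies the hypotheses of Lemma~\ref{abstractnonsense}, and therefore $s$ is biologically avoidable.
\end{proof}
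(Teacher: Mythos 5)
Your proposal is correct and is essentially the paper's own argument: iterate Proposition~\ref{stepbystep} to build a strictly increasing chain of finite words, with $s^{(k)}$ impossible in $T_h$ at height $k$, and then apply Lemma~\ref{abstractnonsense} to the union. Your added remark that impossibility at a given height is a property of the finite prefix alone (hence undisturbed by later extensions) is a nice clarification, but the route is the same.
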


\begin{proof}
Let $h$ be as in Definition~\ref{carlsontree}.
Using Proposition~\ref{stepbystep} repeatedly,
define finite sequences $\{s^k\}_{k>0}$
such that each $s^{k+1}$ strictly extends $s^k$
and each $s^k$ is impossible in $T_h$ at height $k$.
Then $s=\cup s^k$ is biologically avoidable
by Lemma~\ref{abstractnonsense}.
\end{proof}

\begin{example}
$M^9FM^{4200}F\cdots M^{e_n}F\cdots$ is biologically avoidable,
where each $e_n>0$ is chosen minimal so as to avoid the form 
$a+1+\sum_{p=1}^b(c+p)$ ($a,c\leq u$ where $u=\ell+n$, where $\ell$ is the 
length of $M^9F\cdots M^{e_{n-1}}F$).
\end{example}

The above example is suboptimal, because our proof of 
Proposition~\ref{stepbystep} used such staggering overestimates.

\begin{example}
An alternate way to obtain avoidable sequences is to follow the proof of 
Corollary~\ref{thereisanunavoidable}, but to obtain $s_{k+1}$, rather than 
follow the instructions in Proposition~\ref{stepbystep}, we can simply do 
a brute-force search to find the minimal $e>0$ such that $s_k\frown M^eF$ 
is impossible at height $k+1$
(Proposition~\ref{stepbystep} says we won't get stuck).  If we do this for 
the function $h(n)=n$, 
we obtain the avoidable sequence $M^3FM^5FM^8FM^{11}F\cdots$
where each block of $M$'s is $3$ longer than the last, with one exception 
at the beginning (the proof is tedious so we omit it).
That one exception is annoying, so here's how to further optimize the 
sequence:  choose each $s_k$ to be impossible at height $k+1$.
This leaves open the possibility, a priori, the sequence could occur in 
$T_h$
at height $1$.  However, by \emph{fortune}, it ends up being impossible at 
height $1$ anyway.  This yields a very nice well-behaved avoidable 
sequence,
\[
M^2FM^5F\cdots M^{3n-1}F\cdots
\]
(again we omit the formal proof).
\end{example}

All the avoidable sequences we obtain in this manner have the property
that they contain arbitrarily long blocks of one gender.

\section{There is a biologically avoidable sequence in which no block of 
males or females has length more than 2}
\label{juliansection}

Julian Ziegler Hunts discovered an interesting family of populations which 
witness the avoidability of sequences with very short blocks of males and 
females.

\begin{definition}
\label{huntstree}
Suppose $h:\N^+\to\N^+$.
The infinite $2$-gendered population $H_h$ is defined in the same way
as $T_h$ was defined (Definition~\ref{carlsontree})
except for its edges, which are instead defined as follows.
\end{definition}

\setlength{\columnsep}{.5in}
\begin{multicols}{2}

\begin{itemize}
\item $m^1_1$ and $f^1_1$ have no parents.
\item $\forall n>0$, $m^{n+1}_1$ has parents $m^n_1$ and $f^n_{h(n)}$.
\item $\forall n>0$, $f^{n+1}_1$ has parents $m^n_{h(n)}$ and
$f^n_{h(n)}$.
\item $\forall n>0$, $\forall 0<i<h(n)$, $m^n_{i+1}$ has parents
$m^n_1$ and $f^n_i$.
\item $\forall n>0$, $\forall 0<i<h(n)$,
$f^n_{i+1}$ has parents $f^n_i$ and $m^n_i$.
\end{itemize}

\columnbreak

\vphantom{A}

\begin{center}
\includegraphics[scale=.5]{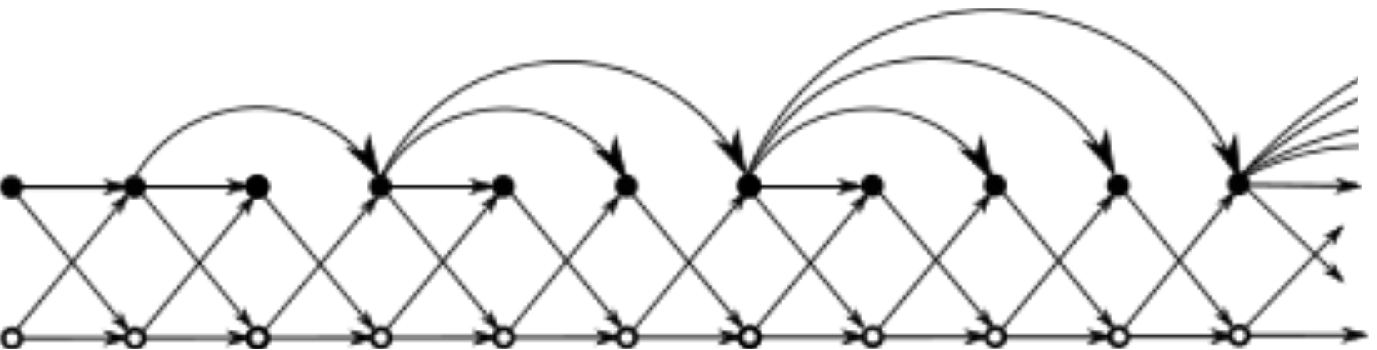}
\end{center}
\vspace{-.75cm}
\begin{quote}
Figure 2:
The infinite $2$-gendered population $H_{n\mapsto n}$.
\end{quote}

\end{multicols}

\begin{lemma}
\label{lefttoreaderhunts}
Let $h:\N^+\to\N^+$.
\begin{enumerate}
\item $H_h$ is an infinite $2$-gendered population with roots $m^1_1$ and 
$f^1_1$.
\item If $m^n_i$ has a son, then $i=1$.
\item No edge skips an entire generation: if an edge of $H_h$ has initial 
vertex in 
$G_i$ then its terminal vertex is in $G_i$ or $G_{i+1}$.
\item If $B=(v_1,\ldots,v_n)$ is a finite directed-path in $H_h$, whose 
genders are alternating, and if $v_1$ lies in $G_i$ and $V_n$ lies in 
$G_j$, then for every $i<p<j$ and every $0<k\leq h(p)$,
precisely one of $\{m^p_k,f^p_k\}$ appears in $B$.
\end{enumerate}
\end{lemma}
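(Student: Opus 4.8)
The plan is to treat parts (1)--(3) as direct verifications against the five edge rules of Definition~\ref{huntstree}, and to isolate part (4) as the combinatorial heart of the lemma. For part (1), I would first read off that every non-root vertex is assigned exactly two parents, one male and one female: each of $m^{n+1}_1$, $f^{n+1}_1$, $m^n_{i+1}$, $f^n_{i+1}$ receives one parent of each gender. Since edges inherit the gender of their initial vertex, this is precisely the $2$-Gendered condition. I would then check A1 by noting the rules assign parents to every vertex except $m^1_1$ and $f^1_1$, so those two are the only roots; A2 by tallying, for each vertex, the finitely many rules that can name it as a parent; A4 from the existence of infinitely many nonempty generations; and A3 exactly as for $T_h$, using that the birthdate convention makes each $\{v:t(v)\le r\}$ finite and that every edge runs from a lower-indexed to a higher-indexed vertex of the same generation, or from $G_n$ to $G_{n+1}$.

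Part (3) falls out of the same case check: rules 4--5 produce edges inside a single $G_n$, rules 2--3 produce edges from $G_n$ into $G_{n+1}$, and no rule produces an edge reaching $G_{n+2}$ or beyond. Part (2) is the observation that the only rules creating a \emph{male} child name $m^n_1$ as the male parent (rule 2 gives $m^{n+1}_1$ the male parent $m^n_1$; rule 4 gives $m^n_{i+1}$ the male parent $m^n_1$), so any male with a son must be some $m^n_1$, i.e.\ have index $1$.

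The substance is part (4), where I would concentrate the effort. The structural facts I would establish first are: (a) along any within-generation edge the index strictly increases (every such edge has the form $m^n_1\to m^n_{i+1}$, $f^n_i\to m^n_{i+1}$, $m^n_i\to f^n_{i+1}$, or $f^n_i\to f^n_{i+1}$), so together with part (3) a directed path meets each generation in a single contiguous block with strictly increasing indices; (b) the only vertices of $G_p$ with an incoming cross-generation edge are the index-$1$ vertices $m^p_1,f^p_1$ (rules 2--3), while the only vertices with an outgoing cross-generation edge are $m^p_1$, $m^p_{h(p)}$, and $f^p_{h(p)}$. For intermediate $i<p<j$ the path must both enter $G_p$ from $G_{p-1}$ and leave to $G_{p+1}$, so by (b) it enters at index $1$. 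I would then use alternation to pin the within-$G_p$ traversal uniquely: from a female $f^p_k$ with $k<h(p)$ the only opposite-gender within-generation successor is $m^p_{k+1}$; from a male $m^p_k$ with $1<k<h(p)$ the only within-generation successor is $f^p_{k+1}$; and from the entry vertex $m^p_1$ the only opposite-gender within-generation successor is $f^p_2$, the edges $m^p_1\to m^p_k$ being barred by alternation.

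The main obstacle---and the point I would write most carefully---is showing the traversal cannot exit $G_p$ before exhausting all indices up to $h(p)$. Since the only cross-generation exits are from $m^p_1$, $m^p_{h(p)}$, $f^p_{h(p)}$, and since entering at $m^p_1$ its sole exit edge $m^p_1\to m^{p+1}_1$ is gender-preserving and hence forbidden by alternation, the path is forced to keep moving inside $G_p$ until it reaches index $h(p)$, visiting exactly one vertex of each index $1,2,\ldots,h(p)$ (alternating $m^p_1,f^p_2,m^p_3,\ldots$ or $f^p_1,m^p_2,f^p_3,\ldots$). A short case analysis on the parity of $h(p)$ and on the two possible entry genders then confirms that the index-$h(p)$ vertex reached admits an alternation-respecting exit to $m^{p+1}_1$ or $f^{p+1}_1$, completing the argument that precisely one of $\{m^p_k,f^p_k\}$ lies on $B$ for each $0<k\le h(p)$; the degenerate case $h(p)=1$ is handled separately, the entry vertex then serving simultaneously as the exit.
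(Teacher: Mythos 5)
Your proposal is correct: parts (1)--(3) are exactly the routine case checks against the five edge rules, and your treatment of part (4) --- entry into an intermediate generation forced to occur at index $1$, within-generation successors pinned down uniquely by alternation, early exit ruled out because the only gender-preserving escape from $m^p_1$ is barred, and the parity check at index $h(p)$ --- is a complete and sound argument. The paper declares this lemma ``left to the reader,'' so there is no competing proof to compare against; what you have written is precisely the verification the author intends, with the one genuinely delicate point (that the path cannot leave $G_p$ before sweeping through every index) correctly identified and handled.
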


\begin{proof}
Left to the reader.
\end{proof}

We define the \emph{heights} of $H_h$, and the corresponding 
$\widehat{m}_i$ and $\widehat{f}_i$, in the same way as we did for $T_h$.

\begin{proposition}
Let $h:\N^+\to\N^+$ be such that $\lim_{n\to\infty}h(n)=\infty$
and $h(n)$ is even for every $n$.
If $k>0$ and
$s$ is any finite $\{M,F\}$-sequence,
then there is some $e>0$
such that $s\frown (FM)^eM$ is impossible in $H_h$ at height $k$.
\end{proposition}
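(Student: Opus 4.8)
The plan is to follow the template of Proposition~\ref{stepbystep} verbatim in spirit, substituting the Carlson "block of males capped by a female" $M^eF$ by the Hunts alternating block $(FM)^e M$ and replacing Lemma~\ref{lefttoreader} by its Hunts analogue Lemma~\ref{lefttoreaderhunts}. I fix $s$, set $\ell=\mathrm{length}(s)$, and commit to a value of $e$ only at the very end, after the arithmetic it must satisfy has been extracted. Suppose toward a contradiction that $s\frown(FM)^eM$ is \emph{possible} in $H_h$ at height $k$: there is a finite directed path $v_0,\ldots,v_{\ell+2e}$ whose vertices are gendered by $s\frown(FM)^eM$ (edges gendered by their initial vertices), with $v_0\in H_k$. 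Since $\widehat m_k$ and $\widehat f_k$ both lie in generation at most $k$, and by Lemma~\ref{lefttoreaderhunts} part 3 no edge skips a generation, the vertex $v_\ell$ where the block $(FM)^eM$ begins lies in some generation $G_i$ with $i\le k+\ell$.

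The decisive structural step is to anchor the far end of the block. The last gender of $(FM)^eM$ is an $M$ borne by the male $v_{\ell+2e-1}$, so $v_{\ell+2e-1}$ has a son $v_{\ell+2e}$; by Lemma~\ref{lefttoreaderhunts} part 2 this forces $v_{\ell+2e-1}=m^j_1$ for some $j$. Its predecessor $v_{\ell+2e-2}$ is female (the block alternates) and is a parent of $m^j_1$, hence equals $f^{j-1}_{h(j-1)}\in G_{j-1}$; in particular $j\ge i+1$ and the alternating subpath $B:=(v_\ell,\ldots,v_{\ell+2e-1})$ meets $G_j$ only in the single vertex $m^j_1$. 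Now $B$ is an alternating directed path from $G_i$ to $G_j$, so Lemma~\ref{lefttoreaderhunts} part 4 applies: for every intermediate generation $p$ with $i<p<j$, exactly one of each pair $\{m^p_t,f^p_t\}$ occurs in $B$, contributing exactly $h(p)$ vertices. Tallying the $2e$ vertices of $B$ by generation yields
\begin{equation*}
2e \;=\; a \;+\; 1 \;+\; \sum_{p=1}^{b} h(c+p),
\end{equation*}
where $c=i\le k+\ell$, where $b=j-i-1\ge0$, where the trailing $1$ counts $m^j_1\in G_j$, and where $a$ is the number of vertices of $B$ in the boundary generation $G_i$, so $1\le a\le 2h(i)\le 2A$ with $A:=\max\{h(1),\ldots,h(k+\ell)\}$.

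The remaining task is purely arithmetic: exhibit $e>0$ for which $2e$ admits no representation of this shape, and here I expect the only real friction. Two features separate the display from a direct application of Lemma~\ref{techlemma}: the bound on $a$ carries a coefficient $2$ rather than $1$ (because an alternating path may consume both a male and a female from the boundary generation $G_i$, whereas in the Carlson argument $B$ was all male), and $2e$ is forced to be even. Both obstructions dissolve under the hypothesis that every $h(n)$ is even. Writing $h=2g$ with $g:\N^+\to\N^+$ and $\lim_n g(n)=\infty$, the sum $S:=\sum_{p=1}^{b}h(c+p)$ is even, which forces $a$ to be odd; putting $a=2\alpha+1$ (so $0\le\alpha\le A-1$), the displayed identity simplifies to $e-1=\alpha+\sum_{p=1}^{b}g(c+p)$ with $\alpha\le 2\max\{g(1),\ldots,g(k+\ell)\}$ and $c\le k+\ell$.

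Finally, since $g\to\infty$ I may choose $u\ge k+\ell$ with $\max\{g(1),\ldots,g(u)\}\ge 2\max\{g(1),\ldots,g(k+\ell)\}$, which absorbs the leftover factor of $2$; applying Lemma~\ref{techlemma} to the function $g$ and this $u$ produces a positive integer $N$ not of the form $\alpha+\sum_{p=1}^{b}g(c+p)$ for any $c\le u$ and $\alpha\le\max\{g(1),\ldots,g(u)\}$. Setting $e:=N+1$ gives $e-1=N$, which would then be exactly of the forbidden form, contradicting the existence of the path. Hence for this $e$ the sequence $s\frown(FM)^eM$ is impossible in $H_h$ at height $k$, as required. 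The main obstacle is thus not the graph-combinatorics (which mirror Proposition~\ref{stepbystep}) but the simultaneous control of the factor-of-$2$ slack and the parity of $2e$, and the evenness of $h$ is precisely the hypothesis that lets the $g=h/2$ substitution reconcile both with Lemma~\ref{techlemma}.
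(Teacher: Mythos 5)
Your argument is correct and is essentially the paper's proof: the same anchoring of $v_{\ell+2e-1}=m^j_1$ via part 2 of Lemma~\ref{lefttoreaderhunts}, the same generation-by-generation count of the alternating block $B$ using parts 3 and 4, and the same appeal to Lemma~\ref{techlemma} applied to the integer-valued function $h/2$ (this being where the evenness hypothesis enters). The only, harmless, divergence is in the bookkeeping: the paper counts just the $e$ males of $B$ and bounds the boundary contribution by $h(i)/2$, whereas you count all $2e$ vertices with the cruder bound $a\le 2h(i)$ and then recover the required form from the parity observation that $a$ must be odd, absorbing the leftover factor of $2$ by enlarging $u$ before invoking Lemma~\ref{techlemma}, which is legitimate because $h/2\to\infty$.
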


\begin{proof}
We may assume $s$ nonempty and that the first gender in $s$ is $M$.
Let $\ell=\mbox{length}(s)$.  Since $h(n)$ is even for every $n$, by 
Lemma~\ref{techlemma}
there is an $e>1$ such that $e-1$ is not of the form $a+\sum_{p=1}^b 
\frac{1}{2}h(c+p)$ for any $a,b,c\in\N$
with $c\leq \ell+k$ and $a\leq \max\{h(1)/2,\ldots,h(\ell+k)/2\}$.
Thus $e$ itself is not of the form $a+1+\sum_{p=1}^b \frac{1}{2}h(c+p)$ 
for any such 
$a,b,c$.  We will show $s\frown (FM)^eM$ is impossible in $H_h$ at height 
$k$.
If not, there is a finite directed path
$v_0,\ldots,v_{\ell},v_{\ell+1},\ldots,v_{\ell+2e}$ in $H_h$, gendered by 
$s\frown (FM)^eM$, with $v_0=\widehat{m}_k$.

By similar reasoning to the proof of Proposition~\ref{stepbystep},
$v_{\ell}$ is in at most the $(k+\ell)$th generation.
I claim $e$ has the form
$a+1+\sum_{p=1}^b \frac{1}{2}h(c+p)$,
some 
$a,b,c$ as above,
a contradiction.
Let $i,j$ be such that $v_{\ell}\in G_i$ (so $i\leq k+\ell$)
and $v_{\ell+2e-1}\in G_j$.
Since $v_{\ell+2e-1}$ is male and has a son $v_{\ell+2e}$,
Lemma~\ref{lefttoreaderhunts} part 2 ensures $v_{\ell+2e-1}=m^j_1$.
Thus $B=(v_{\ell},\ldots,v_{\ell+2e-1})$ is a finite directed path of 
alternating gender beginning with $f^i_x$ for some $x$ and ending with 
$m^j_1$.  All these vertices lie in generations between $i$ and $j$ 
inclusive, and for each $i<p<j$, Lemma~\ref{lefttoreaderhunts} part 4
implies that the number of vertices in $B\cap G_p$ is exactly $h(p)$.
Count the \emph{male} vertices in $B$:
\begin{enumerate}
\item The number of males from $G_i$ is at most half of them (since $B$ 
alternates genders),
that is at most $h(i)/2$.
\item For any $i<p<j$, the number of males from $G_p$ is exactly half 
of them, $h(p)/2$, by Lemma~\ref{lefttoreaderhunts} part 4 since $B$ 
alternates genders.
\item There is exactly one male from $G_j$, namely 
$m^j_1$.
\end{enumerate}
Thus the number of males in $B$ is of the form
$a+1+\sum_{p=1}^b \frac{1}{2}h(c+p)$ ($a,b,c$ as above).
But the number of males in $B$ is $e$-- absurd.
\end{proof}

\begin{corollary}
There is a biologically avoidable sequence in which no
gender occurs thrice in a row.
\end{corollary}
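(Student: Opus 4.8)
The plan is to imitate the proof of Corollary~\ref{thereisanunavoidable}, feeding the preceding Proposition into Lemma~\ref{abstractnonsense}, while keeping careful track of the shape of the blocks we append so as to rule out any run of length three.

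First I would fix a concrete weight function, say $h(n)=2n$, which is even for every $n$ and satisfies $\lim_{n\to\infty}h(n)=\infty$, so that $H_h$ is a legitimate infinite $2$-gendered population (Lemma~\ref{lefttoreaderhunts}(1)) to which the preceding Proposition applies. I would then build an ascending chain of finite sequences $s^1\subsetneq s^2\subsetneq\cdots$, each beginning with the gender $M$, as follows. Start from the one-term seed $s^0=(M)$. Given $s^{k-1}$ (which begins with $M$), apply the preceding Proposition with this sequence in the role of $s$ and with height $k$ to obtain an $e_k>0$ such that
\[
s^k := s^{k-1}\frown (FM)^{e_k}M
\]
is impossible in $H_h$ at height $k$. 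Since $(FM)^{e_k}M$ has length $2e_k+1\ge 3$, each $s^k$ strictly extends its predecessor and still begins with $M$, so the recursion never stalls. Setting $s=\bigcup_k s^k$, each $s^k$ is a finite restriction of $s$ impossible at height $k$, so Lemma~\ref{abstractnonsense} shows $s$ is biologically avoidable.

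It then remains to check that $s$ contains no gender thrice in a row, and here I would argue purely from the block decomposition $s=(M)\frown (FM)^{e_1}M\frown (FM)^{e_2}M\frown\cdots$. Within a single appended block $(FM)^{e}M=FMFM\cdots FMM$ (with $e\ge 1$) the symbols alternate except for the terminal pair $MM$, which is immediately preceded by an $F$; hence inside a block the longest constant run has length exactly $2$. At the seam between consecutive blocks the left block ends in $\cdots FMM$ and the next block begins with $F$, so the two trailing $M$'s are flanked by $F$'s and cannot be extended; likewise the initial seam $(M)\frown(FM)^{e_1}M$ reads $MF\cdots$ and is alternating. Thus every maximal constant run in $s$ has length at most $2$, i.e.\ no gender occurs thrice in a row.

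The only genuinely delicate point is the bookkeeping at the seams: one must confirm that appending the extra $M$ at the end of each block creates a run of exactly two (not three) and that the following block always resumes with $F$, which is precisely what the fixed shape $(FM)^{e}M$ guarantees. Everything else is a direct transcription of the construction already used for Corollary~\ref{thereisanunavoidable}, now driven by the preceding Proposition rather than by Proposition~\ref{stepbystep}.
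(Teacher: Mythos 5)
Your proposal is correct and follows exactly the route the paper intends: the paper's own proof is just ``Similar to the proof of Corollary~\ref{thereisanunavoidable},'' i.e.\ iterate the preceding Proposition on $H_h$ for an even $h$ tending to infinity and feed the resulting chain of initial segments into Lemma~\ref{abstractnonsense}. Your explicit seam-by-seam check that the block shape $(FM)^{e}M$ forces every constant run to have length at most $2$, and your choice to keep each $s^k$ nonempty and beginning with $M$ so the Proposition applies as stated, are exactly the details the paper leaves to the reader.
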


\begin{proof}
Similar to the proof of Corollary~\ref{thereisanunavoidable}.
\end{proof}

\section{Application to cellular automata}
\label{johnstonsection}

In his paper \cite{johnston}, Nathaniel Johnston proved that in certain (a 
large family) of Conway's Life-like games, spaceships have an orthogonal 
speed limit of $1/2$ cells per generation, and a diagonal speed limit of 
$1/3$ cells per generation.  We will give an alternate proof 
using a 
technique which, we believe, might be generalizable to obtain results of a 
wide variety\footnote{If nothing else, our proof would generalize with 
minimal changes to Life-like games in higher dimensions (see Bays 
\cite{bays}).}.  We assume 
a novice-level familiarity with Life-like games (see \cite{eppstein}), 
and brush formal details under the rug.

\begin{definition}
Suppose a Life-like game is played, with some initial configuration in 
generation $1$, which generates a configuration in generation 2, and so 
on.  A \emph{lifeline} for this gameplay is a sequence $c_1,c_2,\ldots$ of 
cells such that:
\begin{enumerate}
\item Each $c_i$ is alive in generation $i$.
\item For each $i$, either $c_i=c_{i+1}$ or $c_i$ is adjacent to 
$c_{i+1}$.
\end{enumerate}
\end{definition}

Thus, a lifeline is a (not necessarily simple) stroll through the cells 
which, at each $i$th step, visits a cell alive in the $i$th generation.

\begin{lemma}
\label{forbidden}
(Two Forbidden Directions) Suppose $x,y$ are any two of the following 
\emph{directions}:
\[
\mbox{N, S, E, W, NE, NW, SE, and SW.}
\]
Consider a Life-like gameplay with the following properties:
\begin{enumerate}
\item The initial configuration is finite.
\item Each generation contains at least one live cell.
\item Birth requires $\geq 3$ neighbors and survival requires $\geq 1$ 
neighbor.
\end{enumerate}
For any such gameplay, there is a lifeline which never steps in direction 
$x$ or $y$ (that is, $x_{i+1}$ is never located in the $x$ or $y$ 
direction from $x_i$).
\end{lemma}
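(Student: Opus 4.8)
The plan is to encode the gameplay as an infinite $1$-gendered population and then read off the lifeline from the unavoidability of the constant sequence. Fix the two forbidden directions $x,y$. First I would build a directed graph $P_{x,y}=(V,E)$ whose vertices are the pairs $(c,i)$ with $c$ a cell alive in generation $i$ (for $i\geq 1$), assigning to $(c,i)$ the birthdate $t(c,i)=i$, perturbed within each generation so that birthdates are distinct. I would place an edge, all of a single gender, from $(c,i)$ to $(c',i+1)$ exactly when $c=c'$ (a \emph{stay}), or when $c$ is adjacent to $c'$ and the direction from $c$ to $c'$ is neither $x$ nor $y$. Thus following an edge forward corresponds to one legal lifeline step that avoids $x$ and $y$, and it advances the generation by exactly one.

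Next I would check that $P_{x,y}$ is an infinite $1$-gendered population in the sense of Definition~\ref{populations}. Assumptions A2 and A3 are immediate: $(c,i)$ has at most nine candidate children, all lying in generation $i+1$, and only finitely many cells are alive by any fixed generation, since the initial configuration is finite and births are local. Assumption A4 holds because every generation contains a live cell, so $|V|=\infty$. The roots will turn out to be exactly the finitely many live cells of generation $1$, giving A1, provided no later vertex is left parentless.

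The hard part will be verifying the $1$-Gendered condition, i.e.\ that every vertex $(c',i+1)$ with $i\geq 1$ retains a parent after the forbidden edges are deleted; this is the only place the birth/survival rule is used. I would argue by cases on whether $c'$ is alive in generation $i$. If $c'$ survived, then $c'$ is itself a live cell of generation $i$ and the stay edge $((c',i),(c',i+1))$ is present, since staying is never one of the forbidden directions. If $c'$ was born, then the birth rule furnishes at least three live neighbors of $c'$ in generation $i$, and since each forbidden direction corresponds to a single neighboring parent position, deleting $x$ and $y$ removes at most two of these $\geq 3$ parent edges, leaving at least one. Hence every non-root has a parent, so $P_{x,y}$ is an honest infinite $1$-gendered population.

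Finally I would invoke the machinery of Section~\ref{eventuallyperiodicsection} with the constant sequence $s=(1,1,\ldots)$, which is periodic of period $1$ and therefore biologically unavoidable. By Proposition~\ref{periodicunav}, $s$ is realized in $P_{x,y}$ by a path beginning at a vertex of $V_0$; since $V_0$ is the set of roots, which are precisely the generation-$1$ cells, and since every edge advances the generation by exactly one, this realizing path has the form $(c_1,1),(c_2,2),\ldots$ with each $c_i$ alive in generation $i$ and each consecutive step a legal lifeline move avoiding $x$ and $y$. Reading off the cells $c_1,c_2,\ldots$ then yields the desired lifeline, completing the proof.
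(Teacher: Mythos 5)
Your proposal is correct and is essentially the paper's own argument: the paper builds the same vertex set $(c,i)$ but keeps all adjacency edges, colors one guaranteed non-forbidden parent edge male (using the $\geq 3$ birth neighbors versus $2$ forbidden directions count, and the stay edge for survivors) and the rest female, then extracts an all-male path via Theorem~\ref{eventualperiodicimpliesunavoidable}. Your variant of deleting the forbidden edges and working with a $1$-gendered population and the constant sequence is interchangeable with this by the paper's own well-definedness lemma, so the two proofs differ only in packaging.
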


To be clear, the third condition is to be understood as liberally as 
possible, making the lemma apply not only to the ruleset 
$B345678/S12345678$, but to any 
sub-ruleset thereof.

We postpone the proof (using biologically unavoidable sequences) of 
Lemma~\ref{forbidden} so we can first see how it applies to spaceship 
speed limits.

\begin{theorem}
(Johnston 2010) For any Life-like ruleset where birth requires $\geq 3$ 
neighbors and survival requires $\geq 1$ neighbor, spaceships can move at 
most $\frac{1}{2}$ cells per generation orthogonally and at most 
$\frac{1}{3}$ cells per generation diagonally.
\end{theorem}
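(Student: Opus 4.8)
The plan is to reduce the two stated bounds to two canonical cases using symmetry, and in each case to run a spaceship through Lemma~\ref{forbidden}. First I would note that the hypotheses of Lemma~\ref{forbidden} are exactly met by a lone spaceship: its initial configuration is finite, every generation contains a live cell (a spaceship is a nonempty pattern that reappears up to translation, so it never dies), and the ruleset has birth requiring $\geq 3$ neighbors and survival requiring $\geq 1$. The rules in question, and the conclusion of Lemma~\ref{forbidden}, are invariant under the dihedral symmetry group of the square lattice, so without loss of generality it suffices to treat one orthogonal direction and one diagonal direction: say a spaceship translating due E (for the $\tfrac12$ bound) and one translating NE (for the $\tfrac13$ bound). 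Write $p$ for the period and $\mathbf d=(d_x,d_y)$ for the per-period displacement vector, so that the orthogonal speed is $|d_x|/p$ in the first case and the diagonal speed is $d/p$ in the second, where $d=d_x=d_y$.

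The second, and cleanest, ingredient is a confinement observation. A spaceship occupies a set of live cells of bounded diameter $D$, and in generation $np+r$ this set is exactly the generation-$r$ set translated by $n\mathbf d$. Since a lifeline $c_1,c_2,\ldots$ sits on a live cell at every generation, each $c_i$ lies in the translating bounding box; hence the position of the lifeline after $n$ generations equals $(n/p)\,\mathbf d+O(D)$. In other words, \emph{every} lifeline is forced to travel with the asymptotic velocity $\mathbf d/p$ of the ship. I would apply Lemma~\ref{forbidden} to obtain a lifeline $\ell$ that never steps in two directions of my choosing, and then confront the rigid velocity $\mathbf d/p$ that $\ell$ is obliged to realize with the restricted repertoire of moves available to $\ell$.

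The crux is the choice of the two forbidden directions together with extracting the sharp constants. For the orthogonal ship I would forbid the two \emph{forward-flanking} directions NE and SE, so that the only way $\ell$ can increase its $x$-coordinate is a pure-E step; for the NE ship I would forbid E and N, the two orthogonal directions flanking the line of motion, leaving NE as the only single step that advances both coordinates. The delicate point is that a bare step-geometry count only yields the speed-of-light bound $1$ (one E step per generation already gives rate $1$ while keeping $y$ fixed), so the improvement to $\tfrac12$ and $\tfrac13$ must reinject the birth/survival thresholds that made lifelines exist in the first place: a cell advancing the front into otherwise-empty space must be \emph{born}, hence needs $\geq 3$ live neighbors in the previous generation lying behind the front, which forces the confined, direction-restricted lifeline to ``spend'' generations that cannot all be forward steps. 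I expect the main obstacle to be precisely this period-by-period accounting — showing that over one period the forbidden-direction lifeline can accumulate forward displacement at most $p/2$ (orthogonal) or $p/3$ (diagonal) once the $\geq 3$-neighbor support requirement is combined with the boundedness of the pattern — and verifying that the chosen pair of forbidden directions is the one that makes this count tight.

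Finally, combining the two facts gives $|d_x|/p\le\tfrac12$ and $d/p\le\tfrac13$, which are exactly the orthogonal and diagonal speed bounds; the dihedral symmetry then transfers them to all eight directions, completing the proof. A secondary check I would record is that the construction survives the ``brush formal details under the rug'' passage from a cellular-automaton gameplay to a genuine gendered population, i.e.\ that the genealogical structure feeding Lemma~\ref{forbidden} is legitimately an infinite $n$-gendered population in the sense of Definition~\ref{populations}, with the step-direction playing the role of the gender.
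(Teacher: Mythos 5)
There is a genuine gap at the crux of your argument, and it comes from the choice of forbidden directions. You forbid the two directions \emph{flanking} the line of motion (NE and SE for an eastward ship; N and E for a northeastward ship), and you then correctly observe that this leaves the pure forward step available at every generation, so the step-geometry count only gives the trivial speed-of-light bound. At that point you defer the entire improvement to $\tfrac12$ and $\tfrac13$ to an unspecified ``period-by-period accounting'' that would reinject the $\geq 3$-neighbor birth condition. That accounting is never carried out, and it is precisely where the theorem lives: as written, your proof establishes nothing better than speed $1$. (Such a front-of-the-pattern birth argument can be made to work --- it is closer to Johnston's original proof --- but it is a substantial separate argument, not a detail.)

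The fix is to forbid the direction of motion \emph{itself} together with one adjacent direction, which is what the paper does: for a ship with a northward velocity component it forbids N and NE. Then the only step with positive $y$-component is NW, which costs one unit westward, and that unit can only be recovered by an E or SE step, which contributes nothing northward; so each unit of net northward displacement consumes at least two generations, giving $\tfrac12$, and each unit of net NE displacement consumes at least three (one NW plus two eastward-recovering steps), giving $\tfrac13$ --- both from the same forbidden pair. In this route the birth/survival thresholds are used \emph{only} inside Lemma~\ref{forbidden} to produce the restricted lifeline; no further appeal to them is needed. Your other ingredients are sound and match the paper's: the hypotheses of Lemma~\ref{forbidden} are indeed met by a spaceship, and your confinement observation (every lifeline is dragged along at the ship's asymptotic velocity because it always sits on a live cell of a bounded, rigidly translating pattern) is a cleaner statement of what the paper asserts when it says the spaceship cannot outrun the lifeline.
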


\begin{proof}[Proof sketch]
Since a spaceship is initially finite and does not go extinct, the 
hypotheses of Lemma~\ref{forbidden} are met.

(Orthogonal)  By symmetry, it's enough to show the spaceship cannot exceed
$\frac{1}{2}$ cells per generation northward.
By Lemma~\ref{forbidden}, there is a lifeline which never steps
in direction N or NE.  The cells in this lifeline are living cells,
hence cells in the spaceship, and it follows that the spaceship cannot 
travel faster than the lifeline.  The only movement the lifeline can make 
with northward component is NW (N and NE being forbidden).
Any such step also moves the lifeline westward, and so to maintain an 
overall northward direction, any such step must be compensated for by a 
step in one of the directions E or SE (NE is forbidden).
So at least two total steps are required per unit of overall northward 
movement.  Thus the speed limit, $\frac{1}{2}$.

(Diagonal)
By symmetry, it's enough to show the spaceship cannot exceed $\frac{1}{3}$ 
cells per generation northeastward.  By Lemma~\ref{forbidden}, there is a 
lifeline which never steps N or NE.  The only way the lifeline may move 
northward is by moving NW, and \emph{two} eastward steps must be added to 
produce overall NE movement.  Thus the speed limit, $\frac{1}{3}$.
\end{proof}

\begin{proof}[Proof sketch of Lemma~\ref{forbidden}]
Let $V$ be the set of pairs $(c,i)$ such that $c$ is a cell alive in 
generation $i$.  Direct an edge from $(c,i)$ to $(d,j)$ if $j=i+1$ and 
either $c=d$ or $c$ is a neighbor of $d$.
By (3) of Lemma~\ref{forbidden}, if $d$ is a cell born into generation 
$i+1$, there must have been at least three distinct neighbors 
$c_1,c_2,c_3$ of $d$ alive in generation $i$.  There are only two 
forbidden directions, so (possibly relabelling) we may assume $d$ does not 
lie in a forbidden direction from $c_1$.  Gender the edge
$((c_1,i),(d,i+1))$ male, and gender all other edges terminating in 
$(d,i+1)$ female.
If $d$ survives into generation $i+1$, then gender the edge 
$((d,i),(d,i+1))$ male and gender all other edges terminating in $(d,i+1)$ 
female.  Let each vertex $(c,i)$ have birthdate $i$.
The reader may check (using the hypotheses of Lemma~\ref{forbidden})
this makes $V$ an infinite $2$-gendered population.
By Theorem~\ref{eventualperiodicimpliesunavoidable},
there is an infinite directed path through this population with all edges 
male.  By construction, male edges never step in a forbidden direction.
\end{proof}

\section{Further Questions}

If $s$ is an avoidable gender sequence, to what extent can we find populations avoiding $s$
which are \emph{universal} among all such, in a way analogous to that described by Cherlin and Shelah \cite{cherlin}?

Given a possibly avoidable sequence, can the infinite gendered populations which realize that sequence
be characterized in some way?  Of particular interest would be a characterization in terms of ordinal
numbers, similar to R.~Schmidt's characterization of rayless graphs
(see Halin \cite{halin}).

But perhaps the most important question remaining is,
what are the biologically unavoidable sequences?  Are there any which are not eventually periodic?

\end{document}